\newtheorem{prop}{Proposition}[section]
\newtheorem{thm}[prop]{Theorem}
\newtheorem{lem}[prop]{Lemma}
\newtheorem{cor}[prop]{Corollary}
\theoremstyle{definition}
\newtheorem*{ack}{Acknowledgments}
\theoremstyle{remark}
\newtheorem{rem}[prop]{Remark}
\numberwithin{equation}{section}
\begin{document}

\title[Anisotropic isoperimetric inequalities]{Anisotropic weighted isoperimetric inequalities for star-shaped and $F$-mean convex hypersurface}

\author[R. Zhou]{Rong Zhou}
\address{School of Mathematical Sciences, University of Science and Technology of China, Hefei 230026, P.R. China}
\email{\href{mailto:zhourong@mail.ustc.edu.cn}{zhourong@mail.ustc.edu.cn}}
\author[T. Zhou]{Tailong Zhou}
\address{School of Mathematics, Sichuan University, Chengdu 610065, Sichuan, P.R. China}
\email{\href{mailto:zhoutailong@scu.edu.cn}{zhoutailong@scu.edu.cn}}
\date{\today}
\subjclass[2010]{53C44; 53C42}
\keywords{Inverse anisotropic mean curvature flow, $F$-mean convex, Anisotropic isoperimetric inequality.}

\begin{abstract}
We prove two anisotropic type weighted geometric inequalities that hold for star-shaped and $F$-mean convex hypersurfaces in $\mathbb{R}^{n+1}$. These inequalities involve the anisotropic $p$-momentum, the anisotropic perimeter and the volume of the region enclosed by the hypersurface. We show that the Wulff shape of $F$ is the unique minimizer of the corresponding functionals among all star-shaped and $F$-mean convex sets. We also consider their quantitative versions characterized by asymmetry index and the Hausdorff distance between the hypersurface and a rescaled Wulff shape. As a corollary, we obtain the stability of the Weinstock inequality for the first non-zero Steklov eigenvalue for star-shaped and strictly mean convex domains.
\end{abstract}

\maketitle



\section{Introduction}

\subsection{Anisotropic weighted isoperimetric inequalities}

The anisotropic version of the classic Euclidean isoperimetric problem is the Wulff Problem, which is also known as the Equilibrium Shape Problem: Consider
\begin{equation*}
	\inf\left\{\int_{\partial E}F(\nu)d\mathcal{H}^{n}:\text{Vol}(E)=m\right\},\ m>0,
\end{equation*}
where $F:\mathbb{R}^{n+1}\rightarrow[0,\infty)$ is 1-homogeneous, convex with $F(x)\geq c|x|$ uniformly for some $c>0$, $E\subset\mathbb{R}^{n+1}$ is a bounded open set with $C^1$-boundary, and $\nu$ is the unit outward normal of $\partial E$. 
The German crystallographer G. Wulff first guessed in \cite{Wul01} that the unique minimizer of the Wulff Problem, up to translations and scalings, is the domain $L$ bounded by $\mathcal{W}$ the Wulff shape of $F$:
\begin{equation*}
	\partial L=\mathcal{W}=\partial\left(\cap_{y\in\mathbb{S}^n}\left\{x\in\mathbb{R}^{n+1}:x\cdot y<F(y)\right\}\right).
\end{equation*}
When considering the Wulff Problem for all measurable sets $E$ of finite perimeter, the proof of the minimality of the Wulff shape can be found e.g. in \cite{Maggi}, while the uniqueness was first obtained by Taylor in \cite{Taylor,Taylor'}. Relevant uniqueness results, especially when $F$ is a general integrand, can be found in \cite{JBro-FMor,IF-SM,HeLiMaGe,ADRosa-K-S}. 

Besides the classic isoperimetric inequality, we are interested to study the anisotropic version of weighted geometric inequalities for hypersurfaces in $\mathbb{R}^{n+1}$. As a corollary of the divergence Theorem, it is known that
\begin{equation}\label{ineq-weight}
	\int_{\Sigma}rd\mu\geqslant (n+1)\text{Vol}(\Omega)
\end{equation}
for all closed orientable embedded hypersurface $\Sigma=\partial \Omega$, where $r$ is the distance to the origin. Equality of \eqref{ineq-weight} holds if and only if $\Sigma$ is a sphere centered at the origin. Gir\~ao-Rodrigues \cite{FGirao-DRodrigues} pointed out that \eqref{ineq-weight} can't be improved to $\int_{\Sigma}rd\mu\geqslant|\mathbb{S}^n|\left(\frac{|\Sigma|}{|\mathbb{S}^n|}\right)^{\frac{n+1}{n}}$ even for strictly convex $\Sigma$. Instead, by using the inverse mean curvature flow to construct monotonous quantities, they can improve the inequality to
\begin{equation}\label{ineq-weight2}
	\int_{\Sigma}rd\mu\geqslant \frac{n}{n+1}|\mathbb{S}^n|\left(\frac{|\Sigma|}{|\mathbb{S}^n|}\right)^{\frac{n+1}{n}}+\text{Vol}(\Omega)
\end{equation}
for star-shaped and strictly mean convex hypersurface $\Sigma$. 

The inverse anisotropic mean curvature flow for star-shaped hypersurface with positive anisotropic mean curvature was studied by Xia in \cite{CXia17} for $n\geqslant 2$. He proved that the flow has a smooth solution and converges exponentially in $C^{\infty}$ to the Wulff shape after rescaling. When $n=1$, the flow becomes an evolution of embedded closed convex curves in the plane $\mathbb{R}^2$, which has been studied by Andrews in \cite{BAndrews98}. With the help of the inverse anisotropic mean curvature flow, we are able to generalize \eqref{ineq-weight2} to the anisotropic case:
\begin{thm}\label{thm-iso-ineq1}
	Let $\Sigma=\partial\Omega\subset\mathbb{R}^{n+1}$, $n\geq1$, be a smooth closed hypersurface which is star-shaped w.r.t. the origin and $F$-mean convex (i.e., the anisotropic mean curvature $H_F\geqslant 0$), then for any point $P\in\mathbb{R}^{n+1}$,
	\begin{equation}\label{eq-iso-ineq1}
		\int_{\Sigma}F^0(x-P)d\mu_F\geqslant\frac{n}{(n+1)^{1+\frac{1}{n}}}\mathrm{Vol}(L)^{-\frac{1}{n}}\big|\Sigma\big|_F^{1+\frac{1}{n}}+\mathrm{Vol}(\Omega),
	\end{equation}
where $L$ is the enclosed domain by the Wulff shape $\mathcal{W}$, $\big|\Sigma\big|_F$ is the anisotropic perimeter of $\Sigma$. And equality is attained if and only if $\Sigma$ is a rescaling of $\mathcal{W}$ centered at $P$. 
\end{thm}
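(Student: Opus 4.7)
The plan is to run the inverse anisotropic mean curvature flow (IAMCF) $\partial_t x = \nu_F/H_F$ starting from $\Sigma$, identify a quantity that is monotone along it, and evaluate at $t=0$ and $t=\infty$. By Xia's existence and convergence theorem (and Andrews' result when $n=1$), the flow admits a global smooth solution $\{\Sigma_t=\partial\Omega_t\}_{t\geq 0}$ with $|\Sigma_t|_F = |\Sigma_0|_F\,e^t$, and the rescaled surfaces $\tilde\Sigma_t := e^{-t/n}\Sigma_t$ converge in $C^\infty$ to $\lambda^\ast\mathcal W$ centered at the origin, where $(\lambda^\ast)^n(n+1)\mathrm{Vol}(L)=|\Sigma_0|_F$. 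Set $J(t):=\int_{\Sigma_t}F^0(x-P)\,d\mu_F$, $C:=\tfrac{n}{(n+1)^{1+1/n}}\mathrm{Vol}(L)^{-1/n}$, and $Q(t):=J(t)-\mathrm{Vol}(\Omega_t)-C|\Sigma_t|_F^{1+1/n}$. The strategy is to show $\tilde Q(t):=e^{-(n+1)t/n}Q(t)$ is non-increasing with $\lim_{t\to\infty}\tilde Q(t)=0$, which then gives $Q(0)\geq 0$.

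Two key estimates drive the monotonicity. Standard IAMCF formulas give $\partial_t(d\mu_F)=d\mu_F$, $\frac{d}{dt}\mathrm{Vol}(\Omega_t)=\int_{\Sigma_t}H_F^{-1}\,d\mu_F$, and
$$J'(t) \;=\; \int_{\Sigma_t}\frac{\langle DF^0(x-P),\nu_F\rangle}{H_F}\,d\mu_F + J(t).$$
The anisotropic Cauchy--Schwarz $\langle a,b\rangle\leq F(a)F^0(b)$ with $a=DF^0(x-P)$, $b=\nu_F=DF(\nu)$, combined with the polar duality identities $F\circ DF^0\equiv 1$ and $F^0\circ DF\equiv 1$, yields the pointwise bound $\langle DF^0(x-P),\nu_F\rangle\leq 1$, whence $J'(t)\leq\int_{\Sigma_t}H_F^{-1}\,d\mu_F+J(t)$. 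On the other hand, the pointwise inequality $F^0(x-P)F(\nu)\geq\langle x-P,\nu\rangle$ together with the divergence identity $\int_{\Sigma_t}\langle x-P,\nu\rangle\,d\mu=(n+1)\mathrm{Vol}(\Omega_t)$ (valid for any $P$) produces the Minkowski-type bound $J(t)\geq(n+1)\mathrm{Vol}(\Omega_t)$. Assembling these,
$$Q'(t)-\tfrac{n+1}{n}Q(t)\;\leq\;-\tfrac{1}{n}\bigl[J(t)-(n+1)\mathrm{Vol}(\Omega_t)\bigr]\;\leq\;0,$$
so $\tilde Q'(t)\leq 0$.

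For the limit, the $C^\infty$ convergence $\tilde\Sigma_t\to\lambda^\ast\mathcal W$ together with the crucial observation that $e^{-t/n}P\to 0$ (the translation by $P$ is absorbed in the rescaling) give $e^{-(n+1)t/n}J(t)\to(n+1)(\lambda^\ast)^{n+1}\mathrm{Vol}(L)$ and $e^{-(n+1)t/n}\mathrm{Vol}(\Omega_t)\to(\lambda^\ast)^{n+1}\mathrm{Vol}(L)$, while $Ce^{-(n+1)t/n}|\Sigma_t|_F^{1+1/n}=n(\lambda^\ast)^{n+1}\mathrm{Vol}(L)$ is already constant in $t$; the signed sum vanishes, so $\lim\tilde Q(t)=0$. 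Combined with the monotonicity, $Q(0)\geq 0$, which is the claimed inequality. For the rigidity, equality forces $\tilde Q'\equiv 0$ on $[0,\infty)$ and hence $\langle DF^0(x-P),\nu_F\rangle\equiv 1$ pointwise on $\Sigma$; the equality case of the anisotropic Cauchy--Schwarz then gives $DF^0(x-P)\parallel\nu$ at every $x\in\Sigma$, which forces $F^0(x-P)$ to be constant on $\Sigma$, i.e.\ $\Sigma$ is a rescaling of $\mathcal W$ centered at $P$. The main obstacle I anticipate is the differential estimate: the cancellation between the two $\int H_F^{-1}\,d\mu_F$ contributions from $J'(t)$ and $\frac{d}{dt}\mathrm{Vol}(\Omega_t)$ must be exact so that the only remaining ingredient is the Minkowski-type inequality $J(t)\geq(n+1)\mathrm{Vol}(\Omega_t)$, and the fact that this goes through uniformly for any $P$ (not just when $P$ sits at the star-shape center) is what makes the inequality hold for arbitrary $P$; the $C^\infty$ convergence from Xia's theorem handles the limit interchange routinely.
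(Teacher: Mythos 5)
Your proposal is, at its core, the same argument the paper uses: run the inverse anisotropic mean curvature flow, define a normalization of $\int F^0(x-P)\,d\mu_F-\mathrm{Vol}(\Omega_t)$ that makes the $|\Sigma_t|_F$-contribution constant, prove monotonicity via the two anisotropic Cauchy--Schwarz estimates $DF^0(x-P)\cdot\nu_F\le 1$ and $(x-P)\cdot\nu\le F^0(x-P)F(\nu)$, and compare the value at $t=0$ with the limit. Your $\tilde Q(t)=e^{-(n+1)t/n}Q(t)$ differs from the paper's $Q(\hat\Sigma_t)=|\Sigma_t|_F^{-1-1/n}(\cdots)$ only by an affine change (multiplication by $|\Sigma_0|_F^{1+1/n}$ and subtraction of the constant $C|\Sigma_0|_F^{1+1/n}$), and both monotonicity derivations collapse $Q'-\tfrac{n+1}{n}Q\le -\tfrac{1}{n}[J-(n+1)\mathrm{Vol}]\le 0$. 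Your handling of the limit is a mild simplification: instead of translating the flow by $(1-e^{-t/n})P$ as the paper does, you note that $e^{-t/n}P\to 0$ is absorbed into the rescaling, which is cleaner and avoids a case split on whether $P$ lies inside $\Omega_0$. Your rigidity reasoning (equality $\Rightarrow\ \tilde Q'\equiv 0\ \Rightarrow\ $ equality in Cauchy--Schwarz $\Rightarrow\ x-P=F^0(x-P)\nu_F\ \Rightarrow\ F^0(x-P)$ constant) is correct, though you should note that it also forces equality in the Minkowski-type bound; either equality alone suffices.

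However, there is a genuine gap: the statement allows $H_F\ge 0$, whereas Xia's and Andrews' existence/convergence results for IAMCF require \emph{strict} $F$-mean convexity $H_F>0$. Your proof handles only the strict case. The inequality itself passes to the $H_F\ge 0$ boundary by approximation (perturb $\Sigma$ to be strictly $F$-mean convex, apply the strict case, and take a limit), but you never say this. More seriously, the rigidity statement does \emph{not} survive a naive approximation argument: when $H_F$ vanishes somewhere, you cannot start the flow at all, so ``equality forces $\tilde Q'\equiv 0$ on $[0,\infty)$'' is unavailable. The paper resolves this with a separate variational argument in the spirit of Guan--Li: on the open set $\Sigma^+=\{H_F>0\}$ it considers compactly supported normal perturbations $\eta\nu_F$, uses that the perturbed hypersurfaces remain $F$-mean convex and hence satisfy the (already-established) inequality, and extracts a pointwise Euler--Lagrange identity
\[
H_F\Bigl(F^0(x-P)-\tfrac{|\Sigma|_F^{1/n}}{|\mathcal W|_F^{1/n}}\Bigr)+\bigl(DF^0(x-P)\cdot\nu_F-1\bigr)=0\quad\text{on }\Sigma^+,
\]
which, combined with a touching-sphere (touching Wulff shape) argument at the largest circumscribed $\mathcal{W}_\alpha(P)$, forces $F^0(x-P)\equiv\alpha$ on $\Sigma^+$ and hence $\Sigma^+=\Sigma$. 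This extra step is essential to close the equality characterization under the stated hypothesis $H_F\ge 0$, and your proposal is missing it entirely. You should also note in passing that $F^0(x-P)$ fails to be differentiable at $x=P$, so if the evolving hypersurface crosses $P$ at some time $t_0$ one must observe that $Q$ is still continuous and monotone across $t_0$; this is a small technicality but worth a sentence.
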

Our Theorem \ref{thm-iso-ineq1} also shows that the strict mean convexity condition for the weighted inequalities \eqref{ineq-weight2} can be weakened, i.e., the mean curvature of $\Sigma$ can be non-negative.

Recently, Kwong-Wei \cite{Kwong-Wei} considered sharp inequalities for weighted integrals of the form $\int_{\Sigma}r^kd\mu$, which correspond to the Euclidean weighted isoperimetric problems. Similar to \eqref{ineq-weight2}, their inequalities involve the perimeter of $\Sigma$ and (weighted) volume of $\Omega$, and hold for space forms and some general warped product spaces. 

In case $k=2$, the weighted integral $\int_{\Sigma}r^2d\mu$ is known as the polar moment of inertia, an important quantity in Newtonian physics. Moreover, the following form of isoperimetric inequality for this weighted integral
\begin{equation}\label{ineq-crucial}
	\int_{\Sigma}r^2d\mu\geqslant\text{Vol}(\mathbb{B}^{n+1})^{-\frac{2}{n+1}}|\Sigma|\text{Vol}(\Omega)^{\frac{2}{n+1}}
\end{equation}
is crucial for the proof of the Weinstock inequality by Bucur-Ferone-Nitsch-Trombetti in \cite{B-F-N-T} and by Kwong-Wei in \cite{Kwong-Wei} with weaker convexity assumption. Their results confirmed that for a bounded, open and star-shaped domain $\Omega$ with mean convex boundary $\Sigma$, the Weinstock inequality holds:
\begin{equation}
	\sigma_1(\Omega)|\Sigma|^{\frac{1}{n}}\leqslant\sigma_1(\mathbb{B}^{n+1})|\mathbb{S}^n|^{\frac{1}{n}},  \label{weinstock}
\end{equation} 
where $\sigma_1(\cdot)$ is the first nontrivial Steklov eigenvalue of a region:
\begin{equation}
	\sigma_1(\Omega) = \min\left\lbrace \dfrac{\int_{\Omega} \left| Du \right|^2 \mathrm{d}x}{\int_{\partial\Omega} u^2 \mathrm{d}\mu}:u\in H^1(\Omega)\backslash\{0\},\int_{\partial\Omega} u \mathrm{d}\mu = 0 \right\rbrace.
	\label{steklov}
\end{equation}

Paoli-Trani \cite{G.Paoli-LTrani} studied the anisotropic generalization of the inequality \eqref{ineq-crucial} involving the $p$-momentum, looking forward for possible applications to the Steklov spectrum problem for the pseudo $p$-Laplacian (see \cite{B-F}): For a bounded, open convex set $\Omega\subset\mathbb{R}^{n+1}$, with boundary $\Sigma$, $p>1$, there holds
	\begin{equation}\label{ineq-aniso2}
	\int_{\Sigma}\left(F^0(x)\right)^pd\mu_F\geqslant\text{Vol}(L)^{-\frac{p}{n+1}}\big|\Sigma\big|_F\text{Vol}(\Omega)^{\frac{p}{n+1}},		
\end{equation}
where equality holds only for the rescalings of the Wulff shape $\mathcal{W}$. Their proof relies on the inverse anisotropic mean curvature flow and the anisotropic Heintze-Karcher inequality.  

As a corollary of our Theorem \ref{thm-iso-ineq1}, we prove that the conditions in Paoli-Trani's inequality \eqref{ineq-aniso2} for the $p$-momentum can be weakened to star-shaped, $F$-mean convex hypersurface with $p\geqslant 1$:
\begin{thm}\label{thm-iso-ineq2}
	Under the same conditions in Theorem \ref{thm-iso-ineq1}, let $p\geq1$, then
	\begin{equation}\label{ineq-aniso2'}
		\int_{\Sigma}\left(F^0(x-P)\right)^pd\mu_F\geqslant\mathrm{Vol}(L)^{-\frac{p}{n+1}}\big|\Sigma\big|_F\mathrm{Vol}(\Omega)^{\frac{p}{n+1}}	
	\end{equation} 
 holds and equality is attained if and only if $\Sigma$ is a rescaling of the Wulff shape $\mathcal{W}$ centered at $P$.
\end{thm}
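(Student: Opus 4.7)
The plan is to derive Theorem \ref{thm-iso-ineq2} as a direct corollary of Theorem \ref{thm-iso-ineq1}, combined with two elementary inequalities: a weighted arithmetic--geometric mean inequality (to handle $p=1$) and Jensen's inequality (to reduce $p>1$ to $p=1$). The approach exploits the sharpness of Theorem \ref{thm-iso-ineq1} so that the composed bound remains sharp.

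For the base case $p=1$, write $S:=|\Sigma|_F$, $V:=\mathrm{Vol}(\Omega)$ and $W:=\mathrm{Vol}(L)$, so that Theorem \ref{thm-iso-ineq1} reads
\[
\int_\Sigma F^0(x-P)\,d\mu_F \;\geq\; \frac{n}{(n+1)^{(n+1)/n}}\,W^{-1/n} S^{(n+1)/n} + V.
\]
I would then apply the weighted AM--GM inequality $\tfrac{n}{n+1}X + \tfrac{1}{n+1}Y \geq X^{n/(n+1)} Y^{1/(n+1)}$ with the choices
\[
X := \bigl[(n+1)W\bigr]^{-1/n} S^{(n+1)/n}, \qquad Y := (n+1)V.
\]
A short check shows $\tfrac{n}{n+1}X$ is the first summand on the right above, $\tfrac{1}{n+1}Y = V$, and $X^{n/(n+1)} Y^{1/(n+1)} = W^{-1/(n+1)} S\, V^{1/(n+1)}$, which is exactly \eqref{ineq-aniso2'} with $p=1$. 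For general $p>1$, I would apply Jensen's inequality to the convex function $t\mapsto t^p$ against the probability measure $d\mu_F/|\Sigma|_F$ to obtain
\[
\int_\Sigma \bigl(F^0(x-P)\bigr)^p\,d\mu_F \;\geq\; S^{1-p}\left(\int_\Sigma F^0(x-P)\,d\mu_F\right)^{p}.
\]
Substituting the $p=1$ bound, the factors $S^{1-p}\cdot S^p = S$ and the exponents of $W$ and $V$ assemble into the claimed right-hand side of \eqref{ineq-aniso2'}.

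The equality analysis then traces back through both reductions. For $p=1$, equality forces equality in Theorem \ref{thm-iso-ineq1}, so $\Sigma$ must be a rescaling of $\mathcal W$ centered at $P$; one verifies that such a $\Sigma$ automatically satisfies the Wulff isoperimetric identity $S^{n+1} = (n+1)^{n+1} W V^{n}$ needed for AM--GM equality, so the characterizations are consistent. For $p>1$, equality in Jensen additionally requires $F^0(x-P)$ to be constant on $\Sigma$, i.e. $\Sigma$ to be a level set of $F^0(\,\cdot - P)$, which is precisely a rescaled Wulff shape centered at $P$. I expect the main point requiring care to be not any geometric step---since Theorem \ref{thm-iso-ineq1} does the heavy lifting---but the bookkeeping of equality cases, ensuring that the AM--GM and Jensen equalities collapse to the same geometric conclusion as equality in Theorem \ref{thm-iso-ineq1}.
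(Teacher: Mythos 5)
Your proposal is correct and is essentially the paper's proof: the Jensen inequality you apply to $t\mapsto t^p$ against $d\mu_F/|\Sigma|_F$ is algebraically the same as the H\"older step the paper uses, and your weighted AM--GM with $X,Y$ is the paper's $na+b\geq(n+1)a^{n/(n+1)}b^{1/(n+1)}$ in different notation. The one point of presentation worth keeping from your write-up is the explicit verification that a rescaled Wulff shape satisfies $S^{n+1}=(n+1)^{n+1}WV^n$, so that equality in Theorem~\ref{thm-iso-ineq1} automatically forces equality in the AM--GM step and no extra constraint arises.
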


\subsection{Stability of Weinstock inequality}

We next consider the stability of anisotropic inequalities \eqref{eq-iso-ineq1} and \eqref{ineq-aniso2}, which are characterized by the asymmetry index and the Hausdorff distance between the hypersurface and a rescaled Wulff shape.

Recall that for any two bounded domains $E,\ G\subset \mathbb{R}^{n+1}$, the symmetric difference and Hausdorff distance between them are defined as
\begin{equation}
	|E\triangle G|=\mathrm{Vol}(E\backslash G)+\mathrm{Vol}(G\backslash E),
\end{equation}
\begin{equation}\label{def-aHdist}
	\mathrm{dist}(\partial E,\partial G) = \inf \left\lbrace r\geq0:G\subset E+r\mathbb{B}\ \mbox{and}\ E \subset G+r\mathbb{B}\right\rbrace,
\end{equation}
where $\mathbb{B}$ is the unit ball and ``$+$" is the Minkowski sum. The asymmetry index $\alpha_F(E)$ (also known as the Fraenkel asymmetry $\alpha(E)$ in isotropic case) of the set $E$ is defined as the minimum of symmetric difference between $E$ and any domain bounded by the rescaled and translated Wulff shape with the same volume
\begin{equation}\label{def-asymmetry-index}
	\alpha_F(E):=\min_{P\in\mathbb{R}^{n+1}}\left\{\frac{|E\triangle L_a(P)|}{\mathrm{Vol}(E)}:\mathrm{Vol}(E)=\mathrm{Vol}(L_a(P))\right\},
\end{equation}
 where we denote the rescaled Wulff shape centered at a point $P$ and the corresponding domain as $\mathcal{W}_a(P)=a\mathcal{W}+P$ and $L_a(P)=aL+P$ for simplicity.

Figalli, Maggi and Pratelli \cite{figalli} used optimal transport to show that there exists a constant $C(n)$ such that
	\begin{equation}
		\alpha_F(E)^2 \leqslant C(n) \dfrac{|\partial E|_{F}}{|\partial L|_{F} \left( \frac{|E|}{|L|}\right)^{\frac{n}{n+1}}}-1 \label{equ-quantiwulff}
	\end{equation}
for any set of finite perimeter $E$ with $0<|E|<\infty$. Later, Neumayer and Robin \cite{NR} considered a strong form of the quantitative Wulff inequality and they proved the analogue of Fuglede's stablility results for isoperimetric inequality of nearly spherical sets in the anisotropic case (see \cite[Proposition 1.9]{NR}). Here we consider the stability of (\ref{eq-iso-ineq1}) and (\ref{ineq-aniso2'}) for star-shaped and strictly $F$-mean convex hypersurfaces charaterized by the $L^1$ distance.
\begin{thm}\label{thm-L1-stability}
Let $\Sigma$ be a smooth closed hypersurface enclosing a domain $\Omega$, which is star-shaped w.r.t. the origin and strictly $F$-mean convex. Suppose that $B_{\rho_{-}}(P)\subset\Omega\subset B_{\rho_{+}}(P)$. Then there exists a positive constant $C=C(n,F,\rho_{-},\rho_{+},\min\limits_{\Sigma} H_F )$ such that
	\begin{equation}
		\alpha_F(\Omega) \leqslant C\cdot f_1\left[ \big|\Sigma\big|_F^{-(1+\frac{1}{n})}\left( \int_{\Sigma} F^0(x-P)\mathrm{d}\mu_F - \mathrm{Vol}(\Omega) \right)- (n+1)^{-(1+\frac{1}{n})}n\mathrm{Vol}(L)^{-\frac{1}{n}} \right], \label{L1-stable1}
	\end{equation}
	and
	\begin{equation}
		\alpha_F(\Omega) \leqslant C\cdot f_1\left[ \dfrac{\displaystyle\int_{\Sigma}\left(F^0(x-P)\right)^p\mathrm{d}\mu_F}{\big|\Sigma\big|_F\mathrm{Vol}(\Omega)^{\frac{p}{n+1}}}-\mathrm{Vol}(L)^{-\frac{p}{n+1}} \right],   \label{L1-fostable}
	\end{equation}
	where $f_1(s)=s^{\frac{1}{4}}+\sqrt{s}\ (s\geqslant 0)$.
\end{thm}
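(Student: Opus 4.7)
My plan is to reduce both stability inequalities \eqref{L1-stable1} and \eqref{L1-fostable} to the quantitative anisotropic Wulff inequality \eqref{equ-quantiwulff} of Figalli--Maggi--Pratelli. Write $D_1$ and $D_2$ for the nonnegative deficits appearing inside $f_1$ in \eqref{L1-stable1} and \eqref{L1-fostable} respectively, and denote the classical anisotropic isoperimetric deficit by
\[
\delta_F(\Omega) := \dfrac{|\Sigma|_F}{|\partial L|_F (\mathrm{Vol}(\Omega)/\mathrm{Vol}(L))^{n/(n+1)}} - 1 \geq 0.
\]
If I can establish a bound of the form $\delta_F(\Omega) \leq C(\sqrt{D_i}+D_i)$ with $C=C(n,F,\rho_\pm,\min_\Sigma H_F)$, then composing with \eqref{equ-quantiwulff}, namely $\alpha_F(\Omega)^2 \leq C(n)\delta_F(\Omega)$, and using $(a+b)^{1/2} \leq \sqrt{a}+\sqrt{b}$ for $a,b \geq 0$ yields
\[
\alpha_F(\Omega) \leq C'\sqrt{\delta_F(\Omega)} \leq C''(D_i^{1/4}+\sqrt{D_i}) = C''\, f_1(D_i),
\]
which is precisely \eqref{L1-stable1} (for $i=1$) and \eqref{L1-fostable} (for $i=2$). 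In particular, the $\sqrt{s}$ term in $f_1$ also absorbs the trivial regime $D_i \gtrsim 1$, where $\alpha_F \leq 2$ is automatic.

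\textbf{Main step.} To obtain $\delta_F(\Omega) \lesssim \sqrt{D_i}+D_i$, I would invoke the inverse anisotropic mean curvature flow $\Sigma_t$ starting from $\Sigma_0 = \Sigma$, which by \cite{CXia17} (and by \cite{BAndrews98} in the planar case $n=1$) exists smoothly for all time and converges exponentially, after rescaling, to a Wulff shape. The key observation is that Theorems \ref{thm-iso-ineq1} and \ref{thm-iso-ineq2} are proved by producing monotone functionals $Q_i(t)$ along the flow whose excesses equal the respective deficits, so that
\[
D_i = Q_i(0) - \lim_{t\to\infty} Q_i(t) = -\int_0^\infty Q_i'(t)\, dt.
\]
The sign $-Q_i'(t) \geq 0$ follows from an anisotropic Heintze--Karcher (or Minkowski-type) inequality. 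The core task is to upgrade this sign to a quantitative rigidity of the form
\[
-Q_i'(t) \geq c\, w(t)\, \delta_F(\Sigma_t)^2
\]
for a positive integrable weight $w$, exploiting either a nearly-Wulff Fuglede-type analysis or \eqref{equ-quantiwulff} itself applied at each time. The trapping hypothesis $B_{\rho_-}(P) \subset \Omega \subset B_{\rho_+}(P)$ together with the positivity bound $\min_\Sigma H_F > 0$ guarantees that star-shapedness and trapping persist along the flow and that $\delta_F(\Sigma_t)$ remains comparable to $\delta_F(\Sigma_0)$ on a time window whose length is controlled by the data; a Cauchy--Schwarz in $t$ then converts the integrated identity into the desired $\delta_F(\Omega) \lesssim \sqrt{D_i}+D_i$.

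\textbf{Main obstacle and remaining pieces.} The decisive technical point is the quantitative rigidity just described: the pointwise Heintze--Karcher identity yields only a sign, while a coercive deficit-squared lower bound on $-Q_i'(t)$ is what ultimately drives the stability. Careful bookkeeping of the constants through the flow is also nontrivial, as they depend on $\rho_\pm$ and $\min_\Sigma H_F$ via both the persistence of the trapping and the convergence rate toward the Wulff shape; it is this loss of a square root through the Cauchy--Schwarz step that prevents the optimal rate $\alpha_F \lesssim \sqrt{D_i}$ and produces the $D_i^{1/4}$ term. Once the deficit bound is secured, \eqref{L1-stable1} is immediate, and \eqref{L1-fostable} follows either by repeating the argument with the $p$-dependent monotone quantity from the proof of Theorem \ref{thm-iso-ineq2}, or by an elementary Hölder reduction (for $p\geq 1$) of \eqref{ineq-aniso2'} to the case $p=1$ covered by Theorem \ref{thm-iso-ineq1}.
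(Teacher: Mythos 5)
Your high-level plan—run the inverse anisotropic mean curvature flow and route the conclusion through the Figalli--Maggi--Pratelli quantitative Wulff inequality \eqref{equ-quantiwulff}—is indeed how the paper proceeds, and your H\"older reduction of \eqref{L1-fostable} to the $p=1$ case \eqref{L1-stable1} is also the paper's route. But the central mechanism you propose is not the one that works, and it is not clearly correct.

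The specific coercivity estimate you aim for, $-Q'(t)\geq c\,w(t)\,\delta_F(\Sigma_t)^2$, is both unsubstantiated in your sketch and the wrong power. What the paper actually extracts from the monotonicity computation \eqref{eq-mono}--\eqref{dtq} is $-Q'(t)\gtrsim \int_{\hat{\Sigma}_t}\bigl(F(\nu)-\tfrac{x-P}{F^0(x-P)}\cdot\nu\bigr)\,\mathrm{d}\mu$, and then the divergence theorem together with the coarea formula (Step 2, equations \eqref{L1-est1}--\eqref{L1-est3}) shows this integral dominates $|\hat{\Sigma}_t|_F-|\mathcal{W}_a(P)|_F$, i.e.\ the anisotropic isoperimetric deficit of $\hat{\Omega}_t$ \emph{to the first power}, not squared. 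This divergence-theorem identity is the crux of the proof and does not appear in your sketch; it is what turns the flow monotonicity into isoperimetric information. With a deficit-squared lower bound plus Cauchy--Schwarz in $t$, as you propose, a straightforward bookkeeping gives a strictly worse exponent than $D^{1/4}$ unless you allow the time window $T$ to be macroscopic, and at that scale the comparability $\delta_F(\Sigma_t)\approx\delta_F(\Sigma_0)$ fails because the rescaled flow converges exponentially to the Wulff shape.

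The paper also avoids the $\delta_F$-comparability issue entirely. Rather than bounding $\delta_F(\Omega)$ and then applying \eqref{equ-quantiwulff} to $\Omega$, it integrates over the short window $[0,\sqrt{\varepsilon}]$, uses the mean value theorem to pick a good time $t_\varepsilon<\sqrt{\varepsilon}$ at which the flow hypersurface has isoperimetric deficit $O(\sqrt{\varepsilon})$, applies \eqref{equ-quantiwulff} to $\hat{\Omega}_{t_\varepsilon}$ to get $\alpha_F(\hat{\Omega}_{t_\varepsilon})\lesssim\varepsilon^{1/4}$, and then \emph{directly} bounds $|\Omega\triangle\hat{\Omega}_{t_\varepsilon}|$ and $|L_a(P')\triangle L_b(P')|$ by $O(\sqrt{\varepsilon})$ using Lipschitz-in-$t$ estimates for the evolving volume and the radial graph (this is where $\min_\Sigma H_F$ and the trapping bounds enter). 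A triangle inequality in $L^1$ then produces $f_1(\varepsilon)$. This Step 3 is the missing ingredient in your outline: some form of it is indispensable, and it is where the hypotheses on $\rho_\pm$ and $\min_\Sigma H_F$ are actually consumed. So while your strategic instinct is correct, the proposal as written has a genuine gap at the quantitative-rigidity step and omits the $L^1$-distance control that replaces your tentative ``comparability'' argument.
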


	In \cite{Scheuer}, Scheuer used curvature flows and properties of nearly umbilical hypersurfaces to obtain the stability of quermassintegral inequalities characterized by the Hausdorff distance between the non-convex, star-shaped hypersurface and a geodesic sphere in Euclidean space $\mathbb{R}^{n+1}$. Then in \cite{Sahjwani-Scheuer}, Sahjwani and Scheuer obtained such stability result for horo-convex hypersurfaces in hyperbolic space $\mathbb{H}^{n+1}$. Stability of inequalities involving anisotropic curvature functionals were also obtained in \cite{Sche} by Scheuer and Zhang. When the hypersurface $\Sigma$ is strictly $F$-mean convex and star-shaped with a $C^1$ bound, we obtain the stability of inequalities \eqref{eq-iso-ineq1} and \eqref{ineq-aniso2'} for the Hausdorff distance between $\Sigma$ and a proper rescaled Wulff shape.
\begin{thm}\label{thm-aniso-stability}
	Let $\Sigma$ be a smooth closed hypersurface enclosing a domain $\Omega$, which is star-shaped w.r.t. the origin and strictly $F$-mean convex. Regard $\Sigma$ as a graph on $\mathbb{S}^n$ under the polar coordinate system, that is $\Sigma=\{(r(\theta),\theta):\theta\in\mathbb{S}^n\}$. Then there exists a positive constant $C=C(n,F,\min\limits_{\mathbb{S}^n}r,\max\limits_{\mathbb{S}^n}\left|\nabla_{\mathbb{S}^n}r\right|,\min\limits_{\Sigma} H_F )$ and $a>0$ such that
	\begin{equation}
		\mathrm{dist}(\Sigma,a\mathcal{W}) \leqslant C\cdot f_2\left[\big|\Sigma\big|_F^{-(1+\frac{1}{n})}\left( \int_{\Sigma} F^0(x)\mathrm{d}\mu_F - \mathrm{Vol}(\Omega) \right)- (n+1)^{-(1+\frac{1}{n})}n\mathrm{Vol}(L)^{-\frac{1}{n}} \right], \label{stable1}
	\end{equation}
	and
	\begin{equation}
		\mathrm{dist}(\Sigma,a\mathcal{W}) \leqslant C\cdot f_2\left[ \dfrac{\displaystyle\int_{\Sigma}\left(F^0(x)\right)^p\mathrm{d}\mu_F}{\big|\Sigma\big|_F\mathrm{Vol}(\Omega)^{\frac{p}{n+1}}}-\mathrm{Vol}(L)^{-\frac{p}{n+1}} \right],   \label{fostable}
	\end{equation}
	where $f_2(s):=s^{\frac{1}{2(n+2)}}+\sqrt{s}\ (s\geq0)$.
\end{thm}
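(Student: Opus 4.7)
The plan is to reduce the Hausdorff-distance estimate to an $L^\infty$-estimate on the radial graph over $\mathbb{S}^n$, and then derive the latter from an $L^1$-asymmetry bound via a Gagliardo-Nirenberg-type interpolation that exploits the $C^1$-bound on $r$. Under the hypotheses, $\Sigma$ and the rescaled Wulff shape $a\mathcal{W}$ (with $a>0$ chosen so that $\mathrm{Vol}(aL)=\mathrm{Vol}(\Omega)$) are both $C^1$-graphs from the origin over $\mathbb{S}^n$ with uniformly bounded $C^1$-norms, so $\mathrm{dist}(\Sigma,a\mathcal{W})\leq C\|r-r_L\|_{L^\infty(\mathbb{S}^n)}$, where $r,r_L$ are the respective radial functions.

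For the $L^1$-asymmetry, radial integration yields
\[
|\Omega \triangle L_a(P)| \;=\; \tfrac{1}{n+1}\int_{\mathbb{S}^n}\bigl|r^{n+1}-r_{L,P}^{n+1}\bigr|\,d\theta\;\geq\; c(\rho_-)\,\|r-r_{L,P}\|_{L^1(\mathbb{S}^n)},
\]
and the Figalli--Maggi--Pratelli quantitative Wulff inequality \eqref{equ-quantiwulff} gives $\alpha_F(\Omega)^2 \leq C\mathcal{D}_{\mathrm{iso}}(\Omega)$, where $\mathcal{D}_{\mathrm{iso}}$ is the anisotropic isoperimetric deficit. I then show that $\mathcal{D}_{\mathrm{iso}}(\Omega)$ is controlled by the sharpness deficit $\delta$ on the right of \eqref{stable1} (resp.\ \eqref{fostable}) by rearranging \eqref{eq-iso-ineq1} (resp.\ \eqref{ineq-aniso2'}) and applying the anisotropic H\"older-type estimates used in their proofs; together with a quantitative centroid argument (exploiting the star-shapedness from the origin to absorb the optimal translation $P^\ast$ into the origin), this yields $\|r-r_L\|_{L^1(\mathbb{S}^n)}\leq C\sqrt{\delta}$.

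The final step interpolates to $L^\infty$ on $\mathbb{S}^n$ using the $C^1$-bound on $u:=r-r_L$. H\"older gives $\|u\|_{L^2}^2\leq \|u\|_{L^\infty}\|u\|_{L^1}\leq C\sqrt\delta$ via the a-priori bound $\|u\|_{L^\infty}\leq\rho_+$; then the pointwise ball-covering argument---if $|u(x_0)|=M$ and $\|\nabla u\|_{L^\infty}\leq\Lambda$, then $|u|\geq M/2$ on the geodesic ball $B_{\mathbb{S}^n}(x_0,M/(2\Lambda))$---gives $\|u\|_{L^\infty}^{n+2}\leq C\Lambda^n\|u\|_{L^2}^2\leq C\sqrt\delta$, hence $\|u\|_{L^\infty}\leq C\delta^{1/(2(n+2))}$. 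This matches the dominant small-$\delta$ term of $f_2$, while the $\sqrt\delta$ term covers the large-deficit regime where the trivial bound $\mathrm{dist}(\Sigma,a\mathcal W)\lesssim\rho_+$ already suffices. The $p$-momentum inequality \eqref{fostable} is handled identically with only cosmetic changes.

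The principal obstacle is the reduction of the isoperimetric deficit $\mathcal{D}_{\mathrm{iso}}$ to the sharpness deficits on the right of \eqref{stable1}/\eqref{fostable}: this requires careful manipulation of \eqref{eq-iso-ineq1}/\eqref{ineq-aniso2'} together with the strict $F$-mean convexity hypothesis, which enters via quantitative bounds involving $\min_\Sigma H_F$. The centroid argument absorbing $P^\ast$ into the origin is a secondary, more routine obstacle; note that invoking Theorem \ref{thm-L1-stability} directly would yield only $\alpha_F\lesssim\delta^{1/4}$ and hence the weaker exponent $1/(4(n+2))$, so passing through Figalli--Maggi--Pratelli is essential to attain the sharp $1/(2(n+2))$ exponent in $f_2$.
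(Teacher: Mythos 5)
Your proposal diverges fundamentally from the paper's argument, and the divergence hides a genuine gap. The paper does \emph{not} compare $\Omega$'s anisotropic isoperimetric deficit to the sharpness deficit $\delta$; it runs the inverse anisotropic mean curvature flow, picks a time $t_\varepsilon\in(0,\sqrt{\varepsilon})$ at which the pointwise ``anisotropic defect'' $\int_{\hat\Sigma_{t_\varepsilon}}\bigl(F^0(x)F(\nu)-x\cdot\nu\bigr)\mathrm{d}\mu\lesssim\sqrt{\varepsilon}$, rewrites this defect in radial coordinates as controlling $\|\nabla_{\mathbb{S}^n}(r/\rho)\|_{L^2}^2$ via strict convexity of the dual Wulff shape, applies the Poincar\'e inequality on $\mathbb{S}^n$ to get an $L^2$ bound, and only then runs the ball-covering interpolation to $L^\infty$. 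Your proposal drops the flow entirely and hinges on the claim $\mathcal{D}_{\mathrm{iso}}(\Omega)\leqslant C\delta$, which you say follows by ``rearranging'' \eqref{eq-iso-ineq1} and applying ``anisotropic H\"older-type estimates.'' That claim is the crux of your argument (as you yourself note, without it you only get $\delta^{1/4}$), but it is not established, and at the quadratic level it appears to be false: linearizing the isotropic case $\Sigma=\{r=1+u\}$ around the sphere gives $\delta\approx c\,\tfrac{n+1}{2}\mathrm{Var}(u)$, with no gradient term, while $\mathcal{D}_{\mathrm{iso}}\approx\tfrac12\overline{|\nabla_{\mathbb{S}^n}u|^2}-\tfrac{n}{2}\mathrm{Var}(u)$. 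The gradient terms in $W-V$ and $A^{1+1/n}$ cancel exactly in $Q$, so $\mathcal{D}_{\mathrm{iso}}/\delta$ can be made arbitrarily large by high-frequency perturbations of fixed $C^1$-norm. The only way to salvage the inequality is to use the strict $F$-mean convexity to exclude such perturbations quantitatively; but you have not indicated how $\min_\Sigma H_F$ enters, and in fact the hypothesis only bounds $H_F$ below (no upper bound on second fundamental form), so this requires a real argument rather than algebraic rearrangement of the functional inequality. The paper's flow-based route avoids this entirely: the defect integral at time $t_\varepsilon$ has an explicit gradient lower bound (equation \eqref{lower-gradient-expand}), which is precisely the structure missing from $\delta$.

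There is also a secondary issue you flag but do not resolve: your Fraenkel-asymmetry bound optimizes over centers $P^\ast$, while the theorem requires Hausdorff closeness to a Wulff shape $a\mathcal{W}$ centered at the origin. The paper sidesteps this by working from the start with the defect integral $\int\bigl(F^0(x)F(\nu)-x\cdot\nu\bigr)\mathrm{d}\mu$, which is already centered at the origin, so no translation has to be absorbed. The centroid argument you invoke (controlling $|P^\ast|$ by $\sqrt{\delta}$ via star-shapedness) is plausible but not routine, and would need to be carried out. Your final $L^1\to L^2\to L^\infty$ interpolation and the ball-covering estimate $\|u\|_{L^\infty}^{n+2}\lesssim\Lambda^n\|u\|_{L^2}^2$ do match the exponent $\tfrac{1}{2(n+2)}$ and are essentially the same covering trick the paper uses after its Poincar\'e step; the difficulty is entirely in reaching $\|u\|_{L^2}^2\lesssim\sqrt{\delta}$ without the flow.
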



In \cite{Gavitone}, Gavitone, La Manna, Paoli and Trani proved a quantitative Weinstock inequality for convex sets, they attributed the proof to the case of nearly spherical sets by the selection principle. As a corollary of  Theorem \ref{thm-L1-stability} and \ref{thm-aniso-stability}, we proof the stability of the Weinstock inequality for the first non-zero Steklov eigenvalue for star-shaped and strictly mean convex domains, which is the quantitative version of (\ref{weinstock}) proved in \cite{Kwong-Wei}. 
\begin{cor}\label{cor-weinstock-stability}
	Let $\Omega\subset\mathbb{R}^{n+1}$ be a bounded domain with smooth, star-shaped and strictly mean convex boundary. Denote $\mathbb{B}^{n+1}$ as the unit ball in $\mathbb{R}^{n+1}$. Let $P$ be the barycenter of $\Omega$.\\
	(i)Suppose that $B_{\rho_{-}}(P)\subset\Omega\subset B_{\rho_{+}}(P)$. Define $f_1(s)=s^{\frac{1}{4}}+\sqrt{s}$ on $\mathbb{R}^+$. Then there exsits a positive constant $C=C(n,\rho_{-},\rho_{+},\min\limits_{\Sigma} H) $ 
	such that
	\begin{equation}
			 \alpha(\Omega) \leqslant C\cdot f_1\left[ \sigma_1(\mathbb{B}^{n+1}) \left|\mathbb{S}^n\right|^{\frac{1}{n}} - \sigma_1(\Omega) \left|\partial \Omega\right|^{\frac{1}{n}} \right].  \label{L1-weinstable}
	\end{equation}\\
(ii)Suppose that $\Sigma$ is star-shaped w.r.t. $P$ and write $\partial\Omega=\{(r(\theta),\theta):\theta\in\mathbb{S}^n\}$ as a graph on $\mathbb{S}^n$. Define $f_2(s):=s^{\frac{1}{2(n+2)}}+\sqrt{s}$ on $\mathbb{R}^+$. Then there exists a positive constant $C=C(n,\min\limits_{\mathbb{S}^n}r,\max\limits_{\mathbb{S}^n}\left|\nabla_{\mathbb{S}^n}r\right|,\min\limits_{\Sigma} H )$ and $a>0$ such that
	\begin{equation}
		\mathrm{dist}(\partial\Omega,S_a(P)) \leqslant C\cdot f_2\left[ \sigma_1(\mathbb{B}^{n+1}) \left|\mathbb{S}^n\right|^{\frac{1}{n}} - \sigma_1(\Omega) \left|\partial \Omega\right|^{\frac{1}{n}} \right]. \label{weinstable}
	\end{equation}
\end{cor}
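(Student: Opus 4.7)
The plan is to deduce the corollary by specialising Theorems \ref{thm-L1-stability} and \ref{thm-aniso-stability} to the Euclidean anisotropy $F(\cdot)=F^{0}(\cdot)=|\cdot|$ (so $\mathcal{W}=\mathbb{S}^{n}$, $L=\mathbb{B}^{n+1}$, $|\partial\Omega|_{F}=|\partial\Omega|$, $H_{F}=H$) and setting $p=2$ in \eqref{ineq-aniso2'}. Under this specialisation the two theorems provide bounds of the form $\alpha(\Omega)\leqslant Cf_{1}(\Delta(P'))$ and $\mathrm{dist}(\partial\Omega,\,a\mathbb{S}^{n})\leqslant Cf_{2}(\Delta(P'))$, where
$$\Delta(P'):=\frac{\int_{\partial\Omega}|x-P'|^{2}\,d\mu}{|\partial\Omega|\,\mathrm{Vol}(\Omega)^{2/(n+1)}}-|\mathbb{B}^{n+1}|^{-2/(n+1)}\geqslant 0$$
is the deficit in the second-moment isoperimetric inequality \eqref{ineq-crucial}. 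Writing $D_{W}:=\sigma_{1}(\mathbb{B}^{n+1})|\mathbb{S}^{n}|^{1/n}-\sigma_{1}(\Omega)|\partial\Omega|^{1/n}\geqslant 0$ for the Weinstock deficit, the entire task therefore reduces to establishing $\Delta(P')\leqslant CD_{W}$ for a suitable reference point $P'$.

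To achieve this, I would replicate the Kwong--Wei proof of the Weinstock inequality: let $c$ be the boundary barycenter of $\Omega$, characterised by $\int_{\partial\Omega}(x-c)\,d\mu=0$, so that the coordinate functions $u_{i}(x):=x_{i}-c_{i}$ are admissible test functions in the Steklov variational principle \eqref{steklov}. Summing $\sigma_{1}(\Omega)\int_{\partial\Omega}u_{i}^{2}\,d\mu\leqslant\int_{\Omega}|Du_{i}|^{2}\,dx$ over $i=1,\dots,n+1$ yields
$$\sigma_{1}(\Omega)\int_{\partial\Omega}|x-c|^{2}\,d\mu\leqslant(n+1)\mathrm{Vol}(\Omega).$$
Combining this upper bound with \eqref{ineq-aniso2'} at $p=2$, $P'=c$ and with the classical isoperimetric inequality $|\partial\Omega|\geqslant(n+1)|\mathbb{B}^{n+1}|^{1/(n+1)}\mathrm{Vol}(\Omega)^{n/(n+1)}$, a routine algebraic manipulation produces
$$\Delta(c)\leqslant\frac{|\mathbb{B}^{n+1}|^{-2/(n+1)}}{\sigma_{1}(\Omega)|\partial\Omega|^{1/n}}\cdot D_{W}.$$
When $D_{W}$ is smaller than a dimensional threshold, $\sigma_{1}(\Omega)|\partial\Omega|^{1/n}\geqslant\tfrac{1}{2}|\mathbb{S}^{n}|^{1/n}$ and hence $\Delta(c)\leqslant C(n)D_{W}$; for larger $D_{W}$ the trivial bounds $\alpha(\Omega)\leqslant 2$ and $\mathrm{dist}(\partial\Omega,\,\cdot\,)\leqslant 2\rho_{+}$ can be absorbed into $Cf_{i}(D_{W})$ by enlarging the constant $C$.

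Part (i) then follows by plugging this estimate into Theorem \ref{thm-L1-stability} applied with $P=c$: since $\alpha(\Omega)$ is translation invariant, the mismatch between the boundary and volume barycenters is irrelevant, and the ball-containment radii $\rho_{\pm}$ transfer from $P_{\mathrm{vol}}$ to $c$ with controlled loss in the small-deficit regime. For part (ii), Theorem \ref{thm-aniso-stability} is applied after translating $P_{\mathrm{vol}}$ to the origin, which yields a Hausdorff estimate in terms of $\Delta(0)$ rather than $\Delta(c)$. The identity
$$\int_{\partial\Omega}|x|^{2}\,d\mu=\int_{\partial\Omega}|x-c|^{2}\,d\mu+|c-P_{\mathrm{vol}}|^{2}|\partial\Omega|$$
reduces the remaining gap to controlling $|c-P_{\mathrm{vol}}|$. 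I expect this barycenter-alignment step to be the main technical obstacle: one must show that $|c-P_{\mathrm{vol}}|^{2}/\mathrm{Vol}(\Omega)^{2/(n+1)}$ is controlled by a power of $D_{W}$ compatible with $f_{2}$, which should be achievable by combining the $C^{1}$-bounds on the radial function $r(\theta)$ with a bootstrap from the asymmetry bound of part (i).
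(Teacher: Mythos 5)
Your proposal follows essentially the same route as the paper: specialize Theorems \ref{thm-L1-stability} and \ref{thm-aniso-stability} to $F=|\cdot|$ with $p=2$, use coordinate functions centred at a suitable point as Steklov test functions to obtain
\begin{equation*}
\sigma_1(\Omega)\int_{\partial\Omega}|x-P|^2\,\mathrm{d}\mu\leqslant (n+1)\mathrm{Vol}(\Omega),
\end{equation*}
and then transform the Weinstock deficit into a lower bound on the second-moment deficit via the isoperimetric inequality and the a priori bounds on $|\partial\Omega|$ and $\int_{\partial\Omega}|x-P|^2\,\mathrm{d}\mu$ provided by $\rho_\pm$ (or the $C^1$ bound). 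Your observation that the test-function step requires the \emph{boundary} barycenter (i.e.\ $\int_{\partial\Omega}(x_i-P_i)\,\mathrm{d}\mu=0$) is correct, and in fact sharper than the paper's wording, which states $\int_\Omega(x_i-P_i)\,\mathrm{d}x=0$ but then invokes an estimate that only holds for the boundary barycenter.

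The one place where you add unnecessary difficulty is the ``barycenter-alignment step'' at the end. There is no need to reconcile the boundary barycenter $c$ with the volume barycenter $P_{\mathrm{vol}}$: the paper simply uses a single reference point $P$ throughout, namely the point appearing in the hypotheses of the corollary (the ball containment in part (i), the star-shapedness and radial graph bounds in part (ii)). For part (ii) one translates that $P$ to the origin and applies \eqref{fostable} directly, so the Steklov estimate and the stability estimate both refer to the same centre, and the gap you flagged as ``the main technical obstacle'' does not arise. Introducing two different centres, passing between $\Delta(c)$ and $\Delta(0)$, and trying to control $|c-P_{\mathrm{vol}}|$ by a bootstrap from part (i) is substantially more work than the problem requires, and your sketch of that step is not carried out. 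If you instead align your reference point with the corollary's $P$ from the start, the proposal closes up and coincides with the paper's argument.
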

\begin{rem}
 Except the case when all principal curvatures are 0, the strictly mean convex assumption is weaker than the convexity condition in \cite{Gavitone}. However, we could not generalize Theorem \ref{thm-L1-stability}, \ref{thm-aniso-stability} and Corollary \ref{cor-weinstock-stability} to ($F$-) mean convex case using our method. For instance, in isotropic case, if the mean curvature $H$ is only non-negative, along the level set solution of the weak inverse mean curvature flow, the flow hypersurface could jump discontinuously across a set of positive measure, which make it difficult to control the $L^1$, Hausdorff distance between the flow hypersurface and the initial hypersurface.    
\end{rem}

The structure of this paper is the following. In Section \ref{prelim}, we present some notations and basic properties of anisotropic geometry and hypersurface with anisotropic curvature, and introduce the existence and convergence results of the inverse anisotropic mean curvature flow. In Section \ref{sec-proof}, we prove the anisotropic weighted isoperimetric inequalities. In Section \ref{sec-proof-stability}, we prove the stability results.

\section{Preliminary}\label{prelim}
\subsection{Minkowski norm and Wulff shape}\label{sec-wulff}$\ $

We call $F:\mathbb{R}^{n+1}\rightarrow[0,\infty)$ a Minkowski norm if it is a norm in $\mathbb{R}^{n+1}$ (i.e., convex, 1-homogeneous and positive in $\mathbb{R}^{n+1}\backslash\{0\}$) and is smooth in $\mathbb{R}^{n+1}\backslash\{0\}$ with $D^2\left(\frac{1}{2}F^2\right)$ strictly positive definite.

The dual norm of $F$ is defined by
\begin{equation*}
F^0(x)=\sup_{y\neq0}\frac{x\cdot y}{F(y)}=\sup_{y\in\mathcal{K}}x\cdot y,\ \mathcal{K}=\{y\in\mathbb{R}^{n+1}:F(y)\leqslant 1\}.
\end{equation*}

For all $x,\ y\in\mathbb{R}^{n+1}\backslash\{0\}$, we have the following identities (see e.g. \cite{CXia13}):
	\begin{equation}\label{eq-F(DF^0)}
		F(DF^0(y))=1,\ F^0(DF(x))=1,
	\end{equation}
\begin{equation}
	DF\big|_{DF^0(y)}=\frac{y}{F^0(y)},\ DF^0\big|_{DF(x)}=\frac{x}{F(x)}. \label{eq-DF(DF^0)}
\end{equation}

The definition of $F^0$ implies the following anisotropic Cauchy-Schwartz inequality: 
\begin{equation}\label{ineq-CW}
	x\cdot y\leqslant F(x)F^0(y),\ \forall\ x,\ y\in\mathbb{R}^{n+1}.
\end{equation}
Moreover, equality of \eqref{ineq-CW} holds if and only if $\max_{z\neq 0}\frac{z\cdot y}{F(z)}$ is attained at $z=x$, which means $D_z\left(\frac{z\cdot y}{F(z)}\right)\big|_{z=x}=0$. Hence one can check that equality of \eqref{ineq-CW} holds if and only if $x=F^0(x)\cdot DF(y),\ y=F(y)\cdot DF^0(x)$.

Consider a smooth, compact and strictly convex hypersurface $\mathcal{W}$ in $\mathbb{R}^{n+1}$ which bounds a domain $L$, with the origin lying inside it. Let $F:\mathbb{S}^n\rightarrow\mathbb{R}^+$, $F(\xi)=\sup_{x\in\mathcal{W}}x\cdot \xi$ be the support function of $\mathcal{W}$. We can extend $F\in C^\infty(\mathbb{S}^n)$ homogeneously by
\begin{equation*}
	F(x)=|x|F\left(\frac{x}{|x|}\right),\ x\neq 0\ \text{and }F(0)=0.
\end{equation*}
One can check that $F$ becomes a Minkowski norm in $\mathbb{R}^{n+1}$. 

$\mathcal{W}$ is called the Wulff shape (or indicatrix) of $F$. The property of the support function and \eqref{eq-F(DF^0)} imply that
\begin{equation*}
	\mathcal{W}=\{ F(\xi)\xi+\nabla_{\mathbb{S}^n}F(\xi):\xi\in\mathbb{S}^n\}=\{DF(\xi):\xi\in\mathbb{S}^n\}=\{x\in\mathbb{R}^{n+1}:F^0(x)=1\}.
\end{equation*} 
\subsection{Anisotropic curvature of a hypersurface}\label{sec-a-curvature}$\ $

Let $(\Sigma,g)$ be a smooth hypersurface in $\mathbb{R}^{n+1}$, $\nu:\Sigma\rightarrow\mathbb{S}^n$ be its Gauss map. The anisotropic Gauss map $\nu_F:\Sigma\rightarrow\mathcal{W}$ is defined as
\begin{equation}
	\nu_F(x)=DF(\nu(x))=F(\nu(x))\nu(x)+\nabla_{\mathbb{S}^n}F(\nu(x)),
\end{equation}
then $\mathcal{W}$ and $\Sigma$ share the same unit (outer) normal $\nu(x)$ at $\nu_F(x)$ and $x$ respectively.

The anisotropic principal curvatures $\{\kappa_i^F\}_{i=1\cdots n}$ (w.r.t. $\mathcal{W}$) at $x\in\Sigma$ is defined as the eigenvalues of 
\begin{equation*}
	d\nu_F:T_x\Sigma\rightarrow T_{\nu_F(x)}\mathcal{W}.
\end{equation*}

We recall the following formulations of anisotropic curvatures in \cite{BAndrews01} by Andrews, which were reformulated by Xia in \cite{CXia13}, see also \cite{CXia17,CXia17'}. We introduce a Riemannian metric $G$ w.r.t. $F^0$ in $T\mathbb{R}^{n+1}$:
\begin{equation*}
	G(\xi)(V,W):=\frac{D^2\left(\frac{1}{2}\left(F^0(\xi)^2\right)\right)}{\partial\xi^\beta\partial\xi^\gamma}V^\beta W^\gamma,\ \xi\in\mathbb{R}^{n+1}\backslash\{0\},\ V,\ W\in T_\xi\mathbb{R}^{n+1}.
\end{equation*}
One can check that at $x\in\Sigma$, $\nu_F=\nu_F(x)$ is perpendicular to $T_x\Sigma$ w.r.t. $G(\nu_F)$ in the sense
\begin{equation*}
	G(\nu_F)(\nu_F,\nu_F)=1,\ G(\nu_F)(\nu_F,V)=0,\ \forall V\in T_x\Sigma.
\end{equation*}
Then consider the Riemannian metric on $\Sigma$ defined by $\hat{g}(x):=G(\nu_F(x))\big|_{T_x\Sigma}$, denote the covariant derivative of $\hat{g}$ and $G$ by $\hat{\nabla}$ and $\hat{D}$, the first and second fundamental form of $(\Sigma,\hat{g})\subset(\mathbb{R}^{n+1},G)$ are 
\begin{equation*}
	\hat{g}_{ij}=G(\nu_F)(\partial_ix,\partial_jx),\ \hat{h}_{ij}=G(\nu_F)(\hat{D}_{\partial_i}\nu_F,\partial_jx).
\end{equation*}
It has been proved that the anisotropic principal curvatures $\{\kappa_i^F\}_{i=1,\cdots, n}$ are exactly the eigenvalues of $\left(\hat{h}_i^j\right)=\left((\hat{g}^{-1})^{kj}\hat{h}_{ik}\right)$ and the anisotropic mean curvature w.r.t. $F$ is defined as
\begin{equation*}
	H_F=\text{tr}_{\hat{g}}(\hat{h}).
\end{equation*}

We call $\Sigma$ strictly $F$-mean convex if $H_F\big|_\Sigma>0$, and $F$-mean convex if $H_F\big|_\Sigma\geqslant 0$.

It is easy to check that when $\Sigma=\mathcal{W}$, we have $\nu_F(x)=x$, $\hat{h}_{ij}=\hat{g}_{ij}$ and $H_F=n$.
\subsection{Inverse anisotropic mean curvature flow}\label{sec-IAMCF}$\ $

Let $\Sigma\subset\mathbb{R}^{n+1}$ be a smooth, closed, star-shaped and strictly $F$-mean convex hypersurface given by the embedding $X_0:\mathbb{S}^n\rightarrow\mathbb{R}^{n+1}$. The inverse anisotropic mean curvature flow is the smooth family of embeddings $X:\mathbb{S}^n\times[0,T)\rightarrow\mathbb{R}^{n+1}$ satisfying 
\begin{equation}\label{eq-IAMCF}
	\left\{\begin{aligned}
		\frac{\partial}{\partial t}X(\cdot,t)&=\frac{1}{H_F}\nu_F,\\
		X(\cdot,0)&=X_0(\cdot).
	\end{aligned}\right.
\end{equation}
When $n=1$, the flow becomes a family of smooth embedded closed curves in the plane $\mathbb{R}^{2}$ and the $F$-mean convexity becomes the standard convexity for curves.

If a solution of \eqref{eq-IAMCF} exists, the following evolution equations were obtained in \cite{BAndrews01,CXia17}:
\begin{lem}\label{lem-evl}
	Along the anisotropic curvature flow which evolves as $\frac{\partial}{\partial t}X=\eta\nu_F$, the anisotropic area form $d\mu_F$ and the anisotropic perimeter of $\Sigma_t$ evolve under
	\begin{equation*}
		\partial_t d\mu_F=\eta H_Fd\mu_F\left(=d\mu_F\ \text{if }\eta=\frac{1}{H_F}\right),
	\end{equation*}
\begin{equation*}
	\frac{d}{dt}\big|\Sigma_t\big|_F=\int_{\Sigma}\eta H_Fd\mu_F\left(=\big|\Sigma_t\big|_F\ \text{if }\eta=\frac{1}{H_F}\right).
\end{equation*}
\end{lem}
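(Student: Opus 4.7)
Both claims are linked: integrating the pointwise identity $\partial_t d\mu_F=\eta H_Fd\mu_F$ over $\Sigma_t$ yields the perimeter evolution $\frac{d}{dt}|\Sigma_t|_F=\int_{\Sigma_t}\eta H_Fd\mu_F$. Thus the proof reduces to establishing the pointwise formula, and the parenthetical specializations with $\eta=1/H_F$ are immediate substitutions using $|\Sigma_t|_F=\int_{\Sigma_t}d\mu_F$.

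For the pointwise identity, the plan is to write $d\mu_F=F(\nu)\,d\mu$, where $\nu$ is the Euclidean unit outer normal and $d\mu$ the induced Euclidean area element, and evolve each factor under the variation $\partial_tX=V:=\eta\nu_F=\eta DF(\nu)$. Euler's relation for the $1$-homogeneous $F$ gives $\langle V,\nu\rangle=\eta F(\nu)$, so $V$ decomposes as $\eta F(\nu)\,\nu+\eta\nabla_{\mathbb{S}^n}F(\nu)$ with tangential part $\eta\nabla_{\mathbb{S}^n}F(\nu)\in T_x\Sigma$ (since $\nabla_{\mathbb{S}^n}F(\nu)\perp\nu$). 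The standard variation formula yields $\partial_t d\mu=(\mathrm{div}_\Sigma V)\,d\mu$, while differentiating $\langle\nu,\partial_iX\rangle\equiv 0$ gives $\partial_t\nu=-\nabla_\Sigma\langle V,\nu\rangle$ up to a Weingarten correction produced by the tangential part of $V$. Applying the product rule,
\begin{equation*}
\partial_t d\mu_F=\bigl(DF(\nu)\cdot\partial_t\nu+F(\nu)\,\mathrm{div}_\Sigma V\bigr)\,d\mu.
\end{equation*}

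The main obstacle is to recognize this right-hand side as $\eta H_F F(\nu)\,d\mu=\eta H_Fd\mu_F$, where $H_F=\mathrm{tr}_{\hat{g}}\hat{h}$ is the intrinsic quantity built from the metric $\hat{g}(x)=G(\nu_F(x))|_{T_x\Sigma}$ introduced in Section~\ref{sec-a-curvature}. This identification is precisely the anisotropic first variation of the area functional: after expanding $DF(\nu)\cdot\partial_t\nu+F(\nu)\,\mathrm{div}_\Sigma(DF(\nu))$ via the chain rule through $D^2F$ and the Weingarten relation $\nabla^\Sigma_{\partial_iX}\nu=h_i^j\partial_jX$, the resulting symmetric tensor must be matched against $\hat{h}_i^j$ using the dual identities \eqref{eq-F(DF^0)} and \eqref{eq-DF(DF^0)}. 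I would carry this out in a local frame at a single point, which reduces the algebra to a finite matrix identity; this identification is classical and is the form of the first variation derived in Andrews \cite{BAndrews01} and Xia \cite{CXia13, CXia17}, whose computations can be invoked directly to conclude.
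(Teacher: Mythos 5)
The paper does not prove this lemma; it is stated as a citation to Andrews \cite{BAndrews01} and Xia \cite{CXia17}, so there is no in-paper argument to compare yours against. Your outline is correct and is the standard route, but the final identification can be reached more cleanly than by ``matching a symmetric tensor against $\hat h^j_i$.'' With $V=\eta\nu_F$ and $\nu_F=F(\nu)\nu+\nabla_{\mathbb{S}^n}F(\nu)$, one has $\nu\cdot\partial_j\nu_F=0$ (differentiate $\nu\cdot\nu_F=F(\nu)$ and use $\partial_jF(\nu)=\nu_F\cdot\partial_j\nu$), which gives the clean formula $\partial_t\nu=-F(\nu)\nabla^\Sigma\eta$ with no residual Weingarten term. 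Then the two $\nabla^\Sigma\eta$ contributions in $\bigl(DF(\nu)\cdot\partial_t\nu+F(\nu)\,\mathrm{div}_\Sigma V\bigr)$ cancel exactly, leaving
\begin{equation*}
\partial_t d\mu_F=\eta\,F(\nu)\,\bigl(\mathrm{div}_\Sigma\nu_F\bigr)\,d\mu=\eta\,\bigl(\mathrm{div}_\Sigma\nu_F\bigr)\,d\mu_F.
\end{equation*}
Thus the whole proof reduces to the single identity $\mathrm{div}_\Sigma\nu_F=H_F$, the anisotropic analogue of $\mathrm{div}_\Sigma\nu=H$. Since $T_{\nu_F(x)}\mathcal{W}=T_x\Sigma$ (same unit normal), $d\nu_F$ is an endomorphism of $T_x\Sigma$ whose trace is basis-independent, so $\mathrm{tr}(d\nu_F)$ computed with the Euclidean metric (i.e.\ $\mathrm{div}_\Sigma\nu_F$) coincides with $\mathrm{tr}_{\hat g}\hat h=H_F$; this is the content you were deferring to \cite{BAndrews01,CXia13,CXia17}, and it is the only nontrivial input needed.
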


The smooth existence and convergence results for the inverse anisotropic mean curvature flow starting from a star-shaped and strictly $F$-mean convex hypersurface have been proved by Andrews for $n=1$ and Xia for $n\geqslant 2$. 
\begin{thm}[\cite{BAndrews98,CXia17}]\label{thm-IAMCF}
	Suppose $\Sigma_0$ is smooth, closed, star-shaped and strictly $F$-mean convex. Then there exists a unique, smooth, strictly $F$-mean convex solution $X(\cdot,t)$ to \eqref{eq-IAMCF} for $t\in[0,\infty)$. And the rescaled hypersurfaces $e^{-\frac{t}{n}}X(\cdot,t)$ converge exponentially fast to $\alpha_0\mathcal{W}$ in the $C^\infty$ topology, where $\alpha_0=\big|\Sigma_0\big|_F$.
\end{thm}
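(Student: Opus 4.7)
The plan is to reduce the vector flow \eqref{eq-IAMCF} to a scalar parabolic PDE on $\mathbb{S}^n$, because $\Sigma_0$ is star-shaped with respect to the origin. I would write $X(\theta,t)=r(\theta,t)\theta$ and, following the Urbas/Gerhardt approach to inverse curvature flows, introduce $\phi=\log r$. A direct computation gives the outer normal $\nu$ and the induced metric purely in terms of $\nabla_{\mathbb{S}^n}\phi$, and one checks that $\frac{1}{H_F}\nu_F\cdot\theta = v^{-1}\bigl(F(\nu)\cdot H_F^{-1}\bigr)$ where $v=\sqrt{1+|\nabla_{\mathbb{S}^n}\phi|^2}$. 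This turns \eqref{eq-IAMCF} into a fully nonlinear scalar parabolic equation $\partial_t\phi = \mathcal{Q}(\nabla^2\phi,\nabla\phi,\theta)$ that is uniformly parabolic as long as $H_F$ stays positive and bounded. Short-time existence is then standard.

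Next I would establish the a priori estimates needed for long-time existence. A $C^0$ estimate follows from the comparison principle: the Wulff spheres $s\mapsto a_0e^{s/n}\mathcal{W}$ are explicit solutions of the flow, so sandwiching $\Sigma_0$ between two rescaled Wulff shapes yields pinching of the form $c_1 e^{t/n}\le r(\cdot,t)\le c_2 e^{t/n}$. For the $C^1$ estimate one derives the evolution equation for $v$ (or equivalently for the support-like quantity $F(\nu)/\langle X,\nu\rangle$) and applies the maximum principle to a suitable auxiliary function, exploiting the star-shaped hypothesis to ensure $\langle X,\nu\rangle>0$. The crucial step is the $C^2$ bound: compute $\partial_t H_F$ using Lemma~\ref{lem-evl} together with the anisotropic Codazzi/Simons identities from \cite{CXia13,BAndrews01}; the rough shape is
\begin{equation*}
    \partial_t H_F = \tfrac{1}{H_F^2}\widehat{\Delta}H_F - \tfrac{2}{H_F^3}|\widehat{\nabla}H_F|_{\hat g}^2 + \tfrac{|\hat h|^2}{H_F^2}H_F - \cdots,
\end{equation*}
which gives a two-sided bound for $H_F$ once combined with the $C^0,C^1$ estimates. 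Krylov--Safonov and bootstrapping then yield uniform $C^\infty$ bounds on every finite interval, so the flow exists for all $t\in[0,\infty)$ and stays strictly $F$-mean convex.

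For the asymptotic behavior, I would pass to the rescaled flow $\widetilde X(\cdot,t)=e^{-t/n}X(\cdot,t)$, which satisfies an equation where the Wulff shapes become stationary. The key analytic input is a monotone quantity along \eqref{eq-IAMCF}: by Lemma~\ref{lem-evl} the anisotropic area satisfies $|\Sigma_t|_F=e^t|\Sigma_0|_F$, so the scale-invariant functional $|\Sigma_t|_F^{-(n+1)/n}\cdot\text{Vol}(\Omega_t)^{1/(n+1)}$ (or an equivalent anisotropic isoperimetric ratio) is monotone, with equality-of-rate characterizing the Wulff shape. Combined with the uniform $C^\infty$ bounds on $\widetilde X$, Arzela--Ascoli gives subsequential convergence; the monotonicity forces every limit to be a homothety of $\mathcal{W}$, and using the exponential rate of the area it must be exactly $\alpha_0\mathcal{W}$ with $\alpha_0=|\Sigma_0|_F$ (after fixing the natural normalization of $\mathcal{W}$). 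Finally, to upgrade to exponential convergence, linearize the rescaled equation at $\alpha_0\mathcal{W}$ and show that the linearized operator has a spectral gap on the orthogonal complement of the translations, then close a standard contraction argument.

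The hard part, in my view, is the $C^2$ estimate and the exponential rate. The $C^2$ bound is delicate because the anisotropic Simons-type identity brings in additional curvature terms of $\mathcal{W}$ that must be carefully handled to prevent $H_F$ from degenerating or blowing up. The exponential convergence requires identifying the spectrum of the linearized operator on the anisotropic model, which is more subtle than in the round case since one must work with the anisotropic Laplacian $\widehat\Delta$ and its eigenfunctions on $\mathcal{W}$; here I would lean on the Andrews/Xia computations to identify the zero modes as the infinitesimal translations and rescalings.
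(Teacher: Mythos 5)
This theorem is not proved in the paper: it is imported wholesale from Andrews (\cite{BAndrews98}, $n=1$) and Xia (\cite{CXia17}, $n\ge 2$), so there is no ``paper's proof'' to compare against line by line. What you have written is a reasonable sketch of the Gerhardt--Urbas--Xia strategy, and the first half matches Xia's actual argument closely: scalar reduction via $\phi=\log r$ on $\mathbb{S}^n$, short-time existence from uniform parabolicity, $C^0$ pinching between expanding Wulff shapes $c_i e^{t/n}\mathcal{W}$, a $C^1$ bound from a maximum-principle argument on a gradient-type quantity, two-sided bounds for $H_F$ from its evolution equation, and then Krylov--Safonov plus bootstrapping.

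Where you diverge from the cited proof is the exponential convergence. Xia does not linearize around the Wulff shape or compute a spectral gap; he proves directly, by a maximum-principle/Bernstein-type argument on the rescaled scalar equation, that $|\nabla_{\mathbb{S}^n}\phi|$ decays exponentially, and then higher derivatives decay by interior parabolic estimates. Your proposed linearization route (identify translations and dilations as the kernel/slow modes, establish a gap, run a contraction) is a legitimate alternative strategy, but it is genuinely harder in the anisotropic setting because the linearized operator lives on $\mathcal{W}$ rather than $\mathbb{S}^n$ and the translation modes must be shown to be strictly decaying for the \emph{rescaled} flow rather than neutral; you would have to actually carry out this spectral computation rather than, as you say, ``lean on Andrews/Xia,'' since that is not what they do. Two small points: the quantity $|\Sigma_t|_F^{-(n+1)/n}\,\mathrm{Vol}(\Omega_t)^{1/(n+1)}$ you call scale-invariant is not --- the scale-invariant anisotropic isoperimetric ratio is $\mathrm{Vol}(\Omega_t)\,|\Sigma_t|_F^{-(n+1)/n}$, and in any case Xia does not use an isoperimetric monotonicity to identify the limit; he gets it from the exact area relation $|\Sigma_t|_F=e^t|\Sigma_0|_F$ together with the $C^0$ pinching. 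Also, as stated (following the paper) the normalizing constant should really be $\alpha_0=(|\Sigma_0|_F/|\mathcal{W}|_F)^{1/n}$ rather than $|\Sigma_0|_F$ on dimensional grounds, though that is a defect of the paper's statement, not yours.
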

\section{Proof of Anisotropic Weighted Isoperimetric Inequalities}\label{sec-proof}
In this section, we prove Theorem \ref{thm-iso-ineq1} and Theorem \ref{thm-iso-ineq2}. 

Let $Q(\Sigma)$ be the functional of smooth, star-shaped hypersurfaces $\Sigma=\partial\Omega$ in $\mathbb{R}^{n+1}$, defined by
\begin{equation}\label{def-Q(Sigma)}
	Q(\Sigma)=\big|\Sigma\big|_F^{-1-\frac{1}{n}}\left(\int_{\Sigma}F^0(x-P)d\mu_F-\text{Vol}(\Omega)\right).
\end{equation}
And for our purpose, let $\Sigma_t$ be the solution of the inverse anisotropic mean curvature flow \eqref{eq-IAMCF}, we will consider the modified flow
\begin{equation*}
	\hat{\Sigma}_t=\partial\hat{\Omega}_t:=e^{-\frac{t}{n}}\Sigma_t+\left(1-e^{-\frac{t}{n}}\right)P.
\end{equation*}
\begin{prop}\label{prop-mono}
	Let $\Sigma_0=\Sigma$ be a smooth, closed, star-shaped and strictly $F$-mean convex hypersurface, $P\in\mathrm{int}(\Omega)$. Then the quantity $Q(\hat{\Sigma}_t)$ is monotonically nonincreasing along the flow. Moreover, $\frac{d}{dt}Q(\hat{\Sigma}_t)=0$ if and only if $\Sigma_t$ is a rescaling of the Wulff shape $\mathcal{W}$ centered at $P$.
\end{prop}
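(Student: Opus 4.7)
The plan is to run the inverse anisotropic mean curvature flow \eqref{eq-IAMCF} starting from $\Sigma_0=\Sigma$, which by Theorem \ref{thm-IAMCF} remains smooth, star-shaped and strictly $F$-mean convex for all $t\ge 0$, and then to reduce $\frac{d}{dt}Q(\hat\Sigma_t)\le 0$ to two anisotropic Cauchy--Schwartz inequalities (one pointwise, one integral) whose common equality case is exactly a rescaled Wulff shape centered at $P$. Note that $P\in\mathrm{int}(\Omega_0)$ and the flow expands $\Omega_t$ outward, so $P$ stays strictly inside and $F^0(X-P)$ is smooth along the flow.

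First I would strip away the rescaling. Set $J(t):=\int_{\Sigma_t}F^0(X-P)\,d\mu_F$ and $W(t):=\mathrm{Vol}(\Omega_t)$. Lemma \ref{lem-evl} with $\eta=1/H_F$ gives $\partial_t d\mu_F=d\mu_F$, hence $|\Sigma_t|_F=e^t|\Sigma_0|_F$. Since $\hat\Sigma_t$ is the homothety of $\Sigma_t$ of factor $e^{-t/n}$ about $P$ and $F^0$ is $1$-homogeneous, a direct computation gives $|\hat\Sigma_t|_F=|\Sigma_0|_F$ (constant in $t$) and
\begin{equation*}
Q(\hat\Sigma_t)=|\Sigma_0|_F^{-1-1/n}\,e^{-(n+1)t/n}\bigl(J(t)-W(t)\bigr),
\end{equation*}
so $\frac{d}{dt}Q(\hat\Sigma_t)\le 0$ is equivalent to the differential inequality $n(J'-W')\le (n+1)(J-W)$ along the unscaled flow.

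Next I would compute the derivatives using Lemma \ref{lem-evl} and the chain rule $\partial_tF^0(X-P)=H_F^{-1}\,DF^0(X-P)\cdot\nu_F$, obtaining $W'(t)=\int_{\Sigma_t}H_F^{-1}\,d\mu_F$ and
\begin{equation*}
J'(t)=\int_{\Sigma_t}\frac{DF^0(X-P)\cdot\nu_F}{H_F}\,d\mu_F+J(t).
\end{equation*}
Rearranging, the target inequality becomes
\begin{equation*}
n\int_{\Sigma_t}\frac{1-DF^0(X-P)\cdot\nu_F}{H_F}\,d\mu_F+\bigl(J(t)-(n+1)W(t)\bigr)\ge 0,
\end{equation*}
which I would handle term by term. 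For the integrand, \eqref{ineq-CW} applied to $a=DF^0(X-P)$ and $b=\nu_F$ gives $a\cdot b\le F(a)F^0(b)=1\cdot 1=1$ via \eqref{eq-F(DF^0)} (both $F(DF^0(\cdot))$ and $F^0(DF(\cdot))$ equal $1$), so the first summand is non-negative. For the global term, another use of \eqref{ineq-CW} with $a=X-P$, $b=\nu$ combined with the Euclidean divergence identity $\int_{\Sigma_t}\langle X-P,\nu\rangle\,d\mu=(n+1)W(t)$ yields $J(t)\ge (n+1)W(t)$.

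For the equality characterization, $\frac{d}{dt}Q(\hat\Sigma_t)=0$ forces both non-negative summands to vanish pointwise on $\Sigma_t$, which by the equality criterion of \eqref{ineq-CW} recalled in Section \ref{sec-wulff} gives the identity $X-P=F^0(X-P)\,\nu_F(X)$ at every point. The main obstacle is upgrading this pointwise identity to ``$F^0(X-P)$ is constant on $\Sigma_t$'': differentiating $X-P=r(X)\nu_F(X)$ along any $V\in T_X\Sigma_t$ and using that $d\nu_F(V)\in T_{\nu_F(X)}\mathcal{W}\subset\nu^\perp$ while $\langle\nu_F,\nu\rangle=F(\nu)>0$, the Euclidean normal component of $V=\nabla_V r\,\nu_F+r\,d\nu_F(V)$ forces $\nabla_V r=0$; connectedness of $\Sigma_t$ then yields $\Sigma_t=a\mathcal{W}+P$ for some $a>0$, completing the proof.
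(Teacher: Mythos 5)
Your proposal is correct and follows essentially the same path as the paper: the same two anisotropic Cauchy--Schwartz inequalities via \eqref{eq-F(DF^0)} and \eqref{ineq-CW}, and the same reduction of the equality case to the pointwise identity $X-P=F^0(X-P)\,\nu_F$. The only differences are cosmetic: you factor out the exponential rescaling and work with $J$, $W$ before differentiating, whereas the paper applies the quotient rule to $Q(\Sigma_t)$ directly; and for the rigidity step you differentiate $X-P=r\nu_F$ using the Euclidean inner product and $T_{\nu_F(X)}\mathcal{W}=\nu^\perp$, while the paper uses the covariant derivative of $\hat g$ -- both yield $\nabla r\equiv 0$.
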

\begin{proof}
It is easy to see that $Q$ is invariant under rescaling in the sense that $Q(kM+P)=Q(M+P)$ for any hypersurface $M$. Let $\Sigma_t=\partial\Omega_t$. 

First we assume that $P\in\mathrm{int}(\Omega_0)$. Using maximal principle, one can show that $P$ stays in the evolving region $\mathrm{int}(\Omega_t)$, so $F^0(x-P)$ is differentiable at $x\in\Sigma_t$. By Lemma \ref{lem-evl}, we have
\begin{equation}\label{eq-mono}
	\begin{aligned}
		&\frac{d}{dt}Q(\hat{\Sigma}_t)=\frac{d}{dt}Q(\Sigma_t)\\
		=&-(1+\frac{1}{n})\big|\Sigma_t\big|_F^{-1-\frac{1}{n}}\left(\int_{\Sigma_t}F^0(x-P)d\mu_F-\text{Vol}(\Omega_t)\right)\\
		&+\big|\Sigma_t\big|_F^{-1-\frac{1}{n}}\left(\int_{\Sigma_t}\left(DF^0(x-P)\cdot x_t+F^0(x-P)\right)d\mu_F-\int_{\Sigma_t}\frac{1}{H_F}\nu_F \cdot \nu d\mu\right)\\
		=&\big|\Sigma_t\big|_F^{-1-\frac{1}{n}}\left(-\frac{1}{n}\int_{\Sigma_t}F^0(x-P)d\mu_F+(1+\frac{1}{n})\text{Vol}(\Omega_t)+\int_{\Sigma_t}\left(DF^0(x-P)\cdot\nu_F-1\right)\frac{1}{H_F}d\mu_F\right).
	\end{aligned}
\end{equation}
By \eqref{eq-F(DF^0)} and the anisotropic Cauchy-Schwarz inequality, we see
\begin{equation}\label{ineq-CW1}
	DF^0(x-P)\cdot\nu_F-1\leq F(DF^0(x-P))F^0(DF(\nu))-1=0,
\end{equation}
\begin{equation}\label{ineq-CW2}
	(n+1)\text{Vol}(\Omega_t)=\int_{\Sigma_t}(x-P)\cdot\nu d\mu\leq\int_{\Sigma_t}F^0(x-P)F(\nu)d\mu=\int_{\Sigma_t}F^0(x-P)d\mu_F.
\end{equation}
Hence $\frac{d}{dt}Q(\hat{\Sigma}_t)\leq 0$. 

When $P\in\overline{\Omega_0^c}$, then there exists a unique $t_0\in[0,\infty)$ such that $P\in\Sigma_{t_0}$. For $t\neq t_0$, $Q(\Sigma_t)$ is still differentiable with $\frac{d}{dt}Q(\hat{\Sigma}_t)\leq 0$. Since $Q(\hat{\Sigma}_t)=Q(\Sigma_t)$ is continuous, we see $Q(\hat{\Sigma}_t)$ is monotonically nonincreasing.

If $\frac{d}{dt}Q(\hat{\Sigma}_t)=0$, then equality is achieved for those anisotropic Cauchy-Schwartz inequalities appeared in \eqref{ineq-CW1} and \eqref{ineq-CW2}, which implies $x-P=F^0(x-P)DF(\nu)=F^0(x-P)\nu_F$. Taking the covariant derivative induced by $\hat{g}$ on $x=F^0(x)\nu_F$, we see $\hat{\nabla}_iF^0(x-P)\nu_F\in T_x(\Sigma_t)$, $\forall 1\leq i\leq n$, and hence $F^0(x-P)$ is a constant on $\Sigma_t$, which means $\Sigma_t$ is a rescaling of the Wulff shape $\mathcal{W}$ centered at $P$.
\end{proof}
\begin{proof}[Proof of Theorem \ref{thm-iso-ineq1}]
	First we consider the case when $\Sigma$ is strictly $F$-mean convex. By Proposition \ref{prop-mono}, let $\Sigma_t$ be the solution of the inverse anisotropic mean curvature flow \eqref{eq-IAMCF} starting from $\Sigma$, then
	\begin{equation*}
		Q(\Sigma)\geq Q(\hat{\Sigma}_t),\ \forall t\geq0.
	\end{equation*}
By the convergence result in Theorem \ref{thm-IAMCF}, the rescaled flow $\hat{\Sigma}_t$ converges smoothly to a rescaled and translated Wulff shape $\mathcal{W}_{\alpha_0}(P)$. Note that on $\mathcal{W}$ we have $\big|\mathcal{W}\big|_F=\int_{\mathcal{W}}F^0(x)d\mu_F=(n+1)\text{Vol}(L)$, where $\partial L=\mathcal{W}$. Then
\begin{equation}
	Q(\Sigma)\geq\lim_{t\rightarrow\infty}Q(\hat{\Sigma}_t)=Q(\alpha_0\mathcal{W}+P)=Q(\mathcal{W}+P)=(n+1)^{-1-\frac{1}{n}}n\text{Vol}(L)^{-\frac{1}{n}},
\end{equation}
which is the inequality we asserted. By Proposition \ref{prop-mono}, we also see that equality holds if and only if $\Sigma$ is a rescaling of the Wulff shape $\mathcal{W}$ centered at $P$.

In the case when $\Sigma$ is only $F$-mean convex, the inequality follows from the approximation. To prove the equality condition, we use a similar argument in \cite{Guan-Li09}: 

Suppose that $\Sigma$ is a star-shaped and $F$-mean convex hypersurface which attains the equality of \eqref{eq-iso-ineq1}. Let $\Sigma^+=\{x\in\Sigma\ :\ H_F(x)>0\}$. We can find the smallest $\alpha$ such that $\Omega\subset L_\alpha(P)$, then $\mathcal{W}_\alpha(P)$ touches $\Sigma$ at some point $p\in\Sigma^+$ tangentially with $F^0(p-P)=\alpha<\infty$ and hence $\Sigma^+$ is open and non-empty. 

If $P\notin\Sigma^+$, pick any $\eta\in C_0^2(\Sigma^+)$ compactly supported in $\Sigma^+$, let $\Sigma_\varepsilon$ be the smooth family of variational hypersurfaces generated by the vector field $V=\eta\nu_F$, $\Sigma_0=\Sigma$. Then $\Sigma_\varepsilon$ remains to be $F$-mean convex when $|\varepsilon|$ is sufficiently small. We have
\begin{equation*}
	Q(\Sigma_\varepsilon)\geq c,\ Q(\Sigma_0)=c,\ \text{where }c=\frac{n}{(n+1)^{1+\frac{1}{n}}}\text{Vol}(L)^{-\frac{1}{n}}.
\end{equation*} 
Thus by similar calculation in \eqref{eq-mono},
\begin{equation*}
	\begin{aligned}
			0=&\frac{d}{d\varepsilon}\big|_{\varepsilon=0}Q(\Sigma_\varepsilon)\\
			=&\big|\Sigma\big|_F^{-1-\frac{1}{n}}\int_{\Sigma^+}\eta\cdot\left[\frac{H_F}{n\big|\Sigma\big|_F}\left((n+1)\text{Vol}(\Omega)-\int_{\Sigma}F^0(x-P)d\mu_F\right)+\left(DF^0(x-P)\cdot\nu_F-1\right)\right.\\
			&\ \ \ \ \ \ \ \ \ \ \ \ \ \ \ \ \ \ \ \ \left.+H_F\left(F^0(x-P)-\frac{\int_{\Sigma}F^0d\mu_F}{|\Sigma|_F}\right)\right]d\mu_F,\\
			=&\big|\Sigma\big|_F^{-1-\frac{1}{n}}\int_{\Sigma^+}\eta\cdot\left[H_F\left(F^0(x-P)-\frac{\big|\Sigma\big|_F^{\frac{1}{n}}}{\big|\mathcal{W}\big|_F^{\frac{1}{n}}}\right)+\left(DF^0(x-P)\cdot\nu_F-1\right)\right]d\mu_F,
	\end{aligned}
\end{equation*}
where we used that equality of \eqref{eq-iso-ineq1} is achieved on $\Sigma$ and $(n+1)\text{Vol}(L)=\big|\mathcal{W}\big|_F$ in the last equality. Since $\eta$ is arbitrary, we have
\begin{equation*}
	H_F\left(F^0(x-P)-\frac{\big|\Sigma\big|_F^{\frac{1}{n}}}{\big|\mathcal{W}\big|_F^{\frac{1}{n}}}\right)+\left(DF^0(x-P)\cdot\nu_F-1\right)=0,\ \ \forall\ x\in\Sigma^+.
\end{equation*} 
Recall that $p\in\Sigma^+$ is the point where $\Sigma$ touches $F^0(p-P)\mathcal{W}+P$ tangentially, so we have $DF^0(x-P)\cdot\nu_F\big|_{x=p}=1$, then 
\begin{equation*}
	F^0(p-P)=\frac{\big|\Sigma\big|_F^{\frac{1}{n}}}{\big|\mathcal{W}\big|_F^{\frac{1}{n}}},
\end{equation*}
hence 
\begin{equation*}
	\begin{aligned}
			&H_F\left(F^0(x-P)-F^0(p-P)\right)\\
			\geq&H_F\left(F^0(x-P)-\frac{\big|\Sigma\big|_F^{\frac{1}{n}}}{\big|\mathcal{W}\big|_F^{\frac{1}{n}}}\right)+\left(DF^0(x-P)\cdot\nu_F-1\right)=0,\ \forall\ x\in\Sigma^+.
	\end{aligned}
\end{equation*}
Since $H_F>0$ and $F^0(x-P)\leq F^0(p-P)$ on $\Sigma^+$, we see $F^0(x-P)\equiv F^0(p-P)=\alpha$ on $\Sigma^+$. 

If $P\in\Sigma^+$, we can pick $\eta\in C_0^2(\Sigma^+\backslash \{P\})$ and get $F^0(x-P)\equiv\alpha$ on $\Sigma^+\backslash \{P\}$. Since $\Sigma^+$ is open, by continuity we also have $F^0(x-P)\equiv\alpha$ on $\Sigma^+$. 

Hence $\Sigma^+=\Sigma\cap\mathcal{W}_\alpha(P)$ with the anisotropic mean curvature positively bounded from below by $\frac{n}{\alpha}$. We conclude that $\Sigma^+$ is closed. Therefore $\Sigma=\Sigma^+=\{x\in\mathbb{R}^{n+1}:F^0(x-P)=\alpha\}=\mathcal{W}_\alpha(P)$.
\end{proof}
\begin{proof}[Proof of Theorem \ref{thm-iso-ineq2}]
	For $p\geq1$, we can use H\"older's inequality and Theorem \ref{thm-iso-ineq1} to get
	\begin{equation}
		\left(\int_{\Sigma}\left(F^0(x-P)\right)^pd\mu_F\right)^{\frac{1}{p}}\left(\big|\Sigma\big|_F\right)^{1-\frac{1}{p}}\geq\int_{\Sigma}F^0(x-P)d\mu_F\geq c\big|\Sigma\big|_F^{1+\frac{1}{n}}+\text{Vol}(\Omega),
	\end{equation}
where $c=\frac{n}{(n+1)^{1+\frac{1}{n}}}\text{Vol}(L)^{-\frac{1}{n}}$. Since $na+b\geq(n+1)a^{\frac{n}{n+1}}b^{\frac{1}{n+1}},\ \forall\ a,b\geq0$, we see
\begin{equation}
		\frac{\left(\int_{\Sigma}\left(F^0(x-P)\right)^pd\mu_F\right)^{\frac{1}{p}}}{\big|\Sigma\big|_F^{\frac{1}{p}}\text{Vol}(\Omega)^{\frac{1}{n+1}}}\geq c \frac{\big|\Sigma\big|_F^{\frac{1}{n}}}{\text{Vol}(\Omega)^{\frac{1}{n+1}}}+\frac{\text{Vol}(\Omega)^{\frac{n}{n+1}}}{\big|\Sigma\big|_F}\geq\frac{(n+1)}{n^{\frac{n}{n+1}}}c^{\frac{n}{n+1}}=\text{Vol}(L)^{-\frac{1}{n+1}},
\end{equation}
which is the inequality for $p$-momentum. By the equality condition of Theorem \ref{thm-iso-ineq1}, we see that \eqref{ineq-aniso2'} attains equality only for the rescalings of the Wulff shape.
\end{proof}

\section{Proof of Stability Results}\label{sec-proof-stability}
In this section, we use the inverse anisotropic mean curvature flow to prove Theorem \ref{thm-L1-stability}, \ref{thm-aniso-stability} and Corollary \ref{cor-weinstock-stability}. We argue by showing that after a proper period of time, the modified flow hypersurface will become close to both the Wulff shape and the initial hypersurface (in $L^1$ or Hausdorff distance sense).   
\begin{proof}[Proof of \eqref{L1-stable1} in Theorem \ref{thm-L1-stability}]$\ $

	\textbf{Step 1.} Define $Q(\Sigma)$ as in (\ref{def-Q(Sigma)}). Denote $\Sigma_0=\Sigma$, $\Sigma_t$ as the solution of inverse anisotropic mean curvature flow (\ref{eq-IAMCF})
	and $\Omega_t$ as the domain enclosed by $\Sigma_t$. 
	Let
	\begin{equation}
		\varepsilon := Q(\Sigma)-Q(\mathcal{W}+P)=Q(\Sigma)-(n+1)^{-(1+\frac{1}{n})}n\mathrm{Vol}(L)^{-\frac{1}{n}}. \label{eps}
	\end{equation}
	Consider the modified flow hypersurface $\hat{\Sigma}_t=\mathrm{e}^{-\frac{t}{n}}\Sigma_t+\left(1-e^{-\frac{t}{n}}\right)P$ and the domain $\hat{\Omega}_t$ enclosed by $\hat{\Sigma}_t$. Along the flow (\ref{eq-IAMCF}),  $\{\Sigma_t\}$ satisfy the evolution equation (\ref{eq-mono}) and by inequality (\ref{ineq-CW1}),
	\begin{eqnarray}
		\dfrac{\mathrm{d}}{\mathrm{d}t}Q(\Sigma_t) &\leqslant& \big|\Sigma_t\big|_F^{-(1+\frac{1}{n})} \left( -\dfrac{1}{n}\int_{\Sigma_t} F^0(x-P) \mathrm{d}\mu_F + \dfrac{n+1}{n} \mathrm{Vol}(\Omega_t) \right) \notag\\
		&=&-\dfrac{1}{n}\big|\Sigma_t\big|_F^{-(1+\frac{1}{n})} \int_{\Sigma_t} \left( F^0(x-P)F(\nu)-(x-P)\cdot\nu  \right) \mathrm{d}\mu \notag\\
		&=& -\dfrac{1}{n}\big|\Sigma\big|_F^{-(1+\frac{1}{n})}\int_{\hat{\Sigma}_t} \left( F^0(x-P) F(\nu)-(x-P)\cdot\nu \right) \mathrm{d}\mu\label{dtq},
	\end{eqnarray}
	where in (\ref{dtq}) we used $\big|\Sigma_t\big|_F = \mathrm{e}^{t}\big|\Sigma\big|_F$ and the scaling invariance property $Q(kM+P)=Q(M+P)$ for any hypersurface $M$. By Theorem \ref{thm-IAMCF}, as $t\to\infty$, there holds
	\begin{equation}
		\hat{\Sigma}_t\to \alpha\mathcal{W}+P=\mathcal{W}_\alpha(P)\ \ \text{in}\ C^\infty.
	\end{equation}
Hence
	\begin{equation}
		\int_0^{+\infty}\dfrac{\mathrm{d}}{\mathrm{d}t}Q(\Sigma_t) \mathrm{d}t = Q(\mathcal{W}+P)-Q(\Sigma) = -\varepsilon. \label{de}
	\end{equation}
	Integrate (\ref{dtq}) along $[0,+\infty)$ on both sides, we get
	\begin{equation}
		\int_0^{+\infty} \dfrac{\mathrm{d}}{\mathrm{d}t}Q(\Sigma_t) \mathrm{d}t \leqslant -\dfrac{1}{n}\big|\Sigma\big|_F^{-(1+\frac{1}{n})}\int_0^{+\infty}\int_{\hat{\Sigma}_t} \left( F^0(x-P)F(\nu) -(x-P)\cdot \nu \right) \mathrm{d}\mu\mathrm{d}t. \label{tsig} 
	\end{equation}
	Note that the initial domain satisfies $B_{\rho_{-}}(P)\subset\Omega\subset B_{\rho_{+}}(P)$, then there exists $c=c(\rho_{+},F)\geq c'=c'(\rho_{-},F)>0$, such that $L_{c'}(P)\subset\Omega\subset L_c(P)$. By the standard maximum principle, this property is preserved along the inverse anisotropic mean curvature flow, i.e., $L_{e^{\frac{t}{n}}c'}(P)\subset\Omega_t\subset L_{e^{\frac{t}{n}}c}(P)$. Hence 
	\begin{equation}\label{avoidence-barrier}
		L_{c'}(P)\subset\hat{\Omega}_t\subset L_c(P),\ \ c'\leq F^0(x-P)\leq c\ \ \text{on}\ \hat{\Sigma}_t,
	\end{equation}
and then
	\begin{equation}\label{area-upper-bound}
		\big|\Sigma\big|_F=\big|\hat{\Sigma}_\infty\big|_F=\big|\mathcal{W}_\alpha(P)\big|_F\leq\big|\mathcal{W}_c(P)\big|_F\leq C(n,F,\rho_{+}).
	\end{equation} 
 Combining (\ref{de}), (\ref{tsig}), \eqref{avoidence-barrier} and \eqref{area-upper-bound}, we conclude that
	\begin{equation}\label{infty-integral}
		\begin{aligned}
	&\int_0^{\sqrt{\varepsilon}} \int_{\hat{\Sigma}_t}  \left(F(\nu) -\frac{(x-P)}{F^0(x-P)}\cdot \nu \right) \mathrm{d}\mu\mathrm{d}t\\
\leq&\int_0^{+\infty}\int_{\hat{\Sigma}_t} \left(F(\nu) -\frac{(x-P)}{F^0(x-P)}\cdot \nu \right) \mathrm{d}\mu\mathrm{d}t\\
\leq&C(n,F,\rho_{-},\rho_{+})\varepsilon,		
		\end{aligned}
	\end{equation}
then there exists $t_{\varepsilon}\in(0,\sqrt{\varepsilon})$ such that
	\begin{equation}
		\int_{\hat{\Sigma}_{t_{\varepsilon}}}  \left(F(\nu) -\frac{(x-P)}{F^0(x-P)}\cdot \nu \right) \mathrm{d}\mu \leqslant C(n,F,\rho_{-},\rho_{+})\sqrt{\varepsilon}. \label{lowerdist}
	\end{equation}
	
	\textbf{Step 2.} We will use \eqref{lowerdist} to show that $\hat{\Sigma}_{t_{\varepsilon}}$ is also $L_1$ close to a Wulff shape. By the divergence Theorem,
\begin{equation*}
	\begin{aligned}
			\int_{\hat{\Sigma}_{t_{\varepsilon}}}\frac{x-P}{F^0(x-P)}\cdot\nu\mathrm{d}\mu=&\int_{\hat{\Omega}_{t_\epsilon}}\mathrm{div}\left(\frac{x-P}{F^0(x-P)}\right)\mathrm{d}x\\
			=&\int_{\hat{\Omega}_{t_\epsilon}}\left(\frac{\mathrm{div}(x)}{F^0(x-P)}-\frac{DF^0(x-P)\cdot(x-P)}{\left(F^0(x-P)\right)^2}\right)\mathrm{d}x\\
			=&\int_{\hat{\Omega}_{t_\epsilon}}\frac{n}{F^0(x-P)} \mathrm{d}x,
	\end{aligned}
\end{equation*} 
hence
\begin{equation}\label{L1-est1}
	\big|\hat{\Sigma}_{t_\epsilon}\big|_F-\int_{\hat{\Omega}_{t_\epsilon}}\frac{n}{F^0(x-P)}\mathrm{d}x\leq C(n,F,\rho_{-},\rho_{+})\sqrt{\epsilon}.
\end{equation}
Take $a>0$ such that $\mathrm{Vol}(\hat{\Omega}_{t_\epsilon})=\mathrm{Vol}(L_a(P))$, then we have
\begin{equation}\label{L1-est2}
	\begin{aligned}
		&\int_{L_a(P)}\frac{1}{F^0(x-P)}\mathrm{d}x-\int_{\hat{\Omega}_{t_\epsilon}}\frac{1}{F^0(x-P)}\mathrm{d}x\\=&\int_{L_a(P)\backslash\hat{\Omega}_{t_\epsilon}}\frac{1}{F^0(x-P)}\mathrm{d}x-\int_{\hat{\Omega}_{t_\epsilon}\backslash L_a(P)}\frac{1}{F^0(x-P)}\mathrm{d}x\\
		\geq&\int_{L_a(P)\backslash\hat{\Omega}_{t_\epsilon}}\frac{1}{a}\mathrm{d}x-\int_{\hat{\Omega}_{t_\epsilon}\backslash L_a(P)}\frac{1}{a}\mathrm{d}x=0.
	\end{aligned}
\end{equation}
And by the coarea formula,
\begin{equation}\label{L1-est3}
	\int_{L_a(P)}\frac{1}{F^0(x-P)}\mathrm{d}x=\int_{0}^{r}\frac{\big|\mathcal{W}_s(P)\big|_F}{s}\mathrm{d}s=\frac{1}{n}\big|\mathcal{W}_r(P)\big|_F.
\end{equation}
Then  by the quantitative Wulff inequality (\ref{equ-quantiwulff}), combining \eqref{L1-est1}, \eqref{L1-est2}, \eqref{L1-est3}, we see that $\hat{\Sigma}_{t_\epsilon}$ is $L_1$ close to a Wulff shape, in the sense that
\begin{equation}\label{t_epsilon-asymmetry}
	\min_{P'\in\mathbb{R}^{n+1}}\big|\hat{\Omega}_{t_\epsilon}\triangle L_a(P')\big|^2\leq C(n,F)\left(\big|\hat{\Sigma}_{t_\epsilon}\big|_F-\big|\mathcal{W}_a(P)\big|_F\right)\leq C(n,F,\rho_{-},\rho_{+})\sqrt{\epsilon}.
\end{equation}

\textbf{Step 3.} Let $P'$ be the point which achieves $\min\limits_{P'\in\mathbb{R}^{n+1}}\big|\hat{\Omega}_{t_\epsilon}\triangle L_a(P')\big|$. Take $b>0$ such that $\mathrm{Vol}(L_b(P'))=\mathrm{Vol}(\Omega)$. Then by the uniform estimates of the positive lower bound of $H_F$ along the inverse anisotropic mean curvature flow (see e.g., in \cite{CXia17}),
\begin{equation}\label{t_epsilon-asymmetry2}
	\begin{aligned}
		\big|L_a(P')\triangle L_b(P')\big|=&\big|\mathrm{Vol}(L_a(P'))-\mathrm{Vol}(L_b(P'))\big|=\big|\mathrm{Vol}(\hat{\Omega}_{t_\epsilon})-\mathrm{Vol}(\Omega)\big|\\
		\leq&t_\epsilon\cdot\max_{\tau\in[0,t_\epsilon]}\bigg|\frac{\mathrm{d}}{\mathrm{d}\tau}\mathrm{Vol}(\hat{\Omega}_\tau)\bigg|\\
		\leq&\sqrt{\epsilon}\max_\tau\bigg|-\frac{n+1}{n}\mathrm{Vol}(\hat{\Omega}_\tau)+\int_{\hat{\Sigma}_\tau}\frac{1}{H_F}\mathrm{d}\mu_F\bigg|\\
		\leq& C(n,F,\rho_{+},\min\limits_{\Sigma} H_F)\sqrt{\epsilon}.
	\end{aligned}
\end{equation}
For each $t>0$, $\hat{\Sigma}_t$ is star-shaped w.r.t. $P(t):=\left(1-e^{\frac{t}{n}}\right)P$. Then in a small neighborhood $\tau\in[t-\eta,t+\eta]$, we can also assume $\hat{\Sigma}_\tau$ to be star-shaped w.r.t. $P(t)$ and write them as $\hat{\Sigma}_\tau=\{r(\xi,\tau)\cdot\theta+P(t):\theta\in\mathbb{S}^n\}$. Then
\begin{equation}
	\begin{aligned}
		\bigg|\frac{\mathrm{d}}{\mathrm{d}\tau}\big|\Omega\triangle\hat{\Omega}_{\tau}\big|\bigg|=&\bigg|\frac{\mathrm{d}}{\mathrm{d}\tau}\int_{\mathbb{S}^n}\bigg|\int_0^{r(\xi,\tau)}s^n\mathrm{d}s-\int\limits_{\{s:s\theta+P(t)\in\Omega\}}s^n\mathrm{d}s\bigg|\mathrm{d}\mathbb{S}^n\bigg|\\
		\leq&\int_{\mathbb{S}^n}\bigg|\frac{\mathrm{d}}{\mathrm{d}\tau}r\bigg|r^n\mathrm{d}\mathbb{S}^n=\int_{\hat{\Sigma}_\tau}\big|\frac{\mathrm{d}}{\mathrm{d}\tau}\hat{X}(\xi,\tau)\cdot\nu\big|r^n\mathrm{d}\mu\\
		\leq&C(n,F,\rho_{+},\min\limits_{\Sigma} H_F),
	\end{aligned}
\end{equation}
where $\hat{X}(\xi,t)=e^{-\frac{t}{n}}X(\xi,t)+\left(1-e^{-\frac{t}{n}}\right)P$ and we used the variation formula of star-shaped hypersurfaces written as graphs on $\mathbb{S}^n$ in the equality in the second line. Hence for each $t>0$, we have $\bigg|\frac{\mathrm{d}}{\mathrm{d}t}\big|\Omega\triangle\hat{\Omega}_{t}\big|\bigg|\leq C(n,F,\rho_{+},\min\limits_{\Sigma} H_F)$ and
\begin{equation}\label{t_epsilon-asymmetry3}
	\big|\Omega\triangle\hat{\Omega}_{t_\epsilon}\big|\leq t_\epsilon\cdot\max_{t\in[0,t_\epsilon]}\bigg|\frac{\mathrm{d}}{\mathrm{d}t}\big|\Omega\triangle\hat{\Omega}_{t}\big|\bigg|\leq C(n,F,\rho_{+},\min\limits_{\Sigma} H_F)\sqrt{\epsilon}.
\end{equation}
Then by \eqref{t_epsilon-asymmetry}, \eqref{t_epsilon-asymmetry2} and \eqref{t_epsilon-asymmetry3},
\begin{equation}
	\begin{aligned}
		\alpha_F(\Omega)\leq&\frac{\big|\Omega\triangle L_b(P')\big|}{\mathrm{Vol}(\Omega)}\\
		\leq& C(n,F,\rho_{-})\left(\big|\Omega\triangle\hat{\Omega}_{t_\epsilon}\big|+\big|\hat{\Omega}_{t_\epsilon}\triangle L_a(P')\big|+\big|L_a(P')\triangle L_b(P')\big|\right)\\
		\leq&C(n,F,\rho_{-},\rho_{+},\min\limits_{\Sigma} H_F)\left(\epsilon^{\frac{1}{4}}+\sqrt{\epsilon}\right).
	\end{aligned}
\end{equation}
\end{proof}

\begin{proof}[Proof of \eqref{L1-fostable} in Theorem \ref{thm-L1-stability}]
We have
    \begin{eqnarray}
    	\dfrac{\displaystyle\int_{\Sigma}\left(F^0(x-P)\right)^p\mathrm{d}\mu_F}{|\Sigma|_F\mathrm{Vol}(\Omega)^{\frac{p}{n+1}}}-\mathrm{Vol}(L)^{-\frac{p}{n+1}} &\geqslant&  \dfrac{\left( \displaystyle\int_{\Sigma} F^0(x-P) \mathrm{d}\mu_F \right)^p}{|\Sigma|_F^p \mathrm{Vol}(\Omega)^{\frac{p}{n+1}}} - \mathrm{Vol}(L)^{-\frac{p}{n+1}} \notag\\
    	&\geqslant& C \left( \dfrac{\displaystyle\int_{\Sigma} F^0(x-P) \mathrm{d}\mu_F}{|\Sigma|_F\mathrm{Vol}(\Omega)^{\frac{1}{n+1}}} -\mathrm{Vol}(L)^{-\frac{1}{n+1}}\right), \label{pmo}
    \end{eqnarray}
    where we used H\"older's inequality in the first inequality, and \eqref{avoidence-barrier}, \eqref{area-upper-bound} in the second inequality.
    Since $f$ is monotonously increasing and that the lower bound of $|\Sigma|_F$ is controlled by $\rho_{-}$, we reorganize (\ref{stable1}) to obtain
    \begin{equation}
    	\dfrac{|\mathcal{W}|_F^{\frac{1}{n}}}{|\Sigma|_F^{1+\frac{1}{n}}}\left( \int_{\Sigma} F^0(x-P) \mathrm{d}\mu_F - \mathrm{Vol}(\Omega) \right) -\dfrac{n}{n+1} \geqslant Cf_1^{-1}\left(\frac{\alpha_F(\Omega)}{C}\right).
    \end{equation}
    Therefore,
    \begin{equation}\label{1mo}
    	\begin{aligned}
    		  \dfrac{\displaystyle\int_{\Sigma} F^0(x-P) \mathrm{d}\mu_F}{|\Sigma|_F\mathrm{Vol}(\Omega)^{\frac{1}{n+1}}}=& \dfrac{\displaystyle\int_{\Sigma} F^0(x-P) \mathrm{d}\mu_F-\mathrm{Vol}(\Omega)}{|\Sigma|_F\mathrm{Vol}(\Omega)^{\frac{1}{n+1}}} + \dfrac{\mathrm{Vol}(\Omega)^{\frac{n}{n+1}}}{|\Sigma|_F}  \notag\\
    		\geqslant& \dfrac{|\Sigma|_F^{\frac{1}{n}}}{\mathrm{Vol}(\Omega)^{\frac{1}{n+1}}|\mathcal{W}|_F^{\frac{1}{n}}}\left( Cf_1^{-1}\left(\frac{\alpha_F(\Omega)}{C}\right)+\dfrac{n}{n+1} \right) +  \dfrac{\mathrm{Vol}(\Omega)^{\frac{n}{n+1}}}{|\Sigma|_F}. 
    	\end{aligned}
    \end{equation}
   Then by Young's inequality, (\ref{pmo}) and (\ref{1mo}), we get 
   \begin{equation*}
   		\dfrac{\displaystyle\int_{\Sigma}\left(F^0(x-P)\right)^p\mathrm{d}\mu_F}{|\Sigma|_F\mathrm{Vol}(\Omega)^{\frac{p}{n+1}}}-\mathrm{Vol}(L)^{-\frac{p}{n+1}}\geq Cf_1^{-1}\left(\frac{\alpha_F(\Omega)}{C}\right).
   \end{equation*}
Using the fact that $f_1(ab)\geq \min\{\sqrt{a},a^{ \frac{1}{4}} \}f_1(b),\ a,b\geq0$, we get the conclusion. 
\end{proof}

If we assume further that the star-shaped hypersurface $\Sigma$ has a $C^1$ bound on its radial function $r$, then by the Poincar\'e inequality we can obtain the Hausdorff distance estimate between $\Sigma$ and a proper Wulff shape.
\begin{proof}[Proof of \eqref{stable1} in Theorem \ref{thm-aniso-stability}]
	Taking $P$ as the origin in the proof of \eqref{L1-stable1}. We can use the gradient bound to obtain the upper bound of $|\Sigma|_F$. Then by the same argument, there exists $t_{\varepsilon}\in(0,\sqrt{\varepsilon})$ such that
	\begin{equation}
		\int_{\hat{\Sigma}_{t_{\varepsilon}}} \left( F^0(x)F(\nu)-x\cdot \nu \right) \mathrm{d}\mu \leqslant C(n,F,\min\limits_{\mathbb{S}^n}r,\max\limits_{\mathbb{S}^n}\left|\nabla_{\mathbb{S}^n}r\right|)\sqrt{\varepsilon}, \label{lowerdist1}
	\end{equation}
	where $\epsilon=Q(\Sigma)-(n+1)^{-(1+\frac{1}{n})}n\mathrm{Vol}(L)^{-\frac{1}{n}}$. 
	
	For any $(\xi,t)\in\mathbb{S}^n\times\mathbb{R}^+$, let $\gamma(\tau):[0,t]\rightarrow\mathbb{R}^{n+1}$ be the curve starting from $X(\xi,0)$ to $e^{-\frac{t}{n}}X(\xi,t)$ generated by the rescaled flow. Then
	\begin{equation*}
		\begin{aligned}
			\mathrm{dist}_{\mathbb{R}^{n+1}}(X(\xi,0)&,e^{-\frac{t}{n}}X(\xi,t))\leq t\max\limits_{[0,t]}\bigg|\partial_{\tau}\left(\mathrm{e}^{-\frac{\tau}{n}}X(\xi,\tau)\right)\bigg|\\
			&=t\bigg| -\dfrac{1}{n}\hat{X}(\xi,\tau)+\dfrac{1}{\hat{H}_F} \nu_F(\xi,\tau)\bigg|\\
			&\leq C(n,F,\min\limits_{\mathbb{S}^n}r,\max\limits_{\mathbb{S}^n}\left|\nabla_{\mathbb{S}^n}r\right|,\min\limits_{\Sigma} H_F)t,
		\end{aligned}
	\end{equation*}
	where we used the fact again that the $C^1$ bound of $\hat{\Sigma}_t$ and $\min\limits_{\hat{\Sigma}_t} H_F$ are uniformly controlled by the initial datum on $\Sigma$. Hence the Hausdorff distance between $\Sigma$ and $\hat{\Sigma}_t$ satisfies
	\begin{equation}
		\mathrm{dist}_F(\Sigma,\hat{\Sigma}_{t_{\varepsilon}})\leqslant C(n,F,\min\limits_{\mathbb{S}^n}r,\max\limits_{\mathbb{S}^n}\left|\nabla_{\mathbb{S}^n} r \right|,\min\limits_{\Sigma} H_F)\sqrt{\varepsilon}.  \label{dist1}
	\end{equation}
	
	In the following, we use \eqref{lowerdist1} to show that $\hat{\Sigma}_{t_{\varepsilon}}$ is close to a rescaled Wulff shape. Since $\hat{\Sigma}_{t_{\varepsilon}}$ and the Wulff shape $\mathcal{W}$ are both star-shaped, we represent them as the graphs on the sphere $\mathbb{S}^n$. Denote
	\begin{eqnarray*}
		\hat{\Sigma}_{t_{\varepsilon}} &=& \{(r(\theta),\theta):\theta\in\mathbb{S}^n\},\\
		\mathcal{W} &=&\{(\rho(\theta),\theta):\theta\in\mathbb{S}^n\},
	\end{eqnarray*}
	then by the definition of Wulff shape, we know
	\begin{equation}
		F^0(\theta)\rho(\theta)=1. \label{therho}
	\end{equation}
	Here we extend $\rho:\mathbb{S}^n \to \mathbb{R}^+$ homogeneously in $\mathbb{R}^{n+1}$ by
	\begin{equation}
		\rho:\mathbb{R}^n\backslash\{0\} \to \mathbb{R}^+,\ \ \ \ \rho(x)=\rho\left(\dfrac{x}{|x|}\right)|x|.
	\end{equation} 
	We denote $\nabla_{\mathbb{S}^n}$ be the Levi-Civita connection on $\mathbb{S}^n$, then the unit normal vector $\nu$ at $x\in\hat{\Sigma}_{t_{\varepsilon}}$ can be expressed as
	\begin{equation}\label{eq-nu}
		\nu = \dfrac{\theta-\dfrac{\nabla_{\mathbb{S}^n}r}{r}}{\left| \theta-\dfrac{\nabla_{\mathbb{S}^n}r}{r} \right|},\ \ \theta\in\mathbb{S}^n.
	\end{equation}
	It follows that
	\begin{eqnarray}
		F^0(x)F(\nu)-x\cdot \nu &=& \frac{r}{\rho}\cdot \frac{F\left(\theta-\dfrac{\nabla_{\mathbb{S}^n}r}{r}\right)}{\left| \theta-\dfrac{\nabla_{\mathbb{S}^n}r}{r} \right|}-x\cdot\nu\notag\\
		&=& \left(x\cdot\nu\right)\left[F\left(\frac{\theta}{\rho}-\frac{\nabla_{\mathbb{S}^n}r}{r\rho}\right)-1\right], \label{radial}
	\end{eqnarray}
	where (\ref{therho}), \eqref{eq-nu} and the 1-homogeneous property of $F^0$ and $F$ are used. We also identify $\theta$ with $\dfrac{\partial}{\partial r}$. Note that $\forall\ x\in\mathbb{R}^{n+1}\backslash\{0\}$,
	\begin{equation}
		F^0(x) = F^0\left( \dfrac{x}{|x|} \right)|x| = \dfrac{1}{\rho\left( \dfrac{x}{|x|} \right)} |x| = \dfrac{|x|^2}{\rho(x)},
	\end{equation}
	then
	\begin{equation}
		DF^0(x) = -\dfrac{D\rho(x)}{\rho^2(x)} |x|^2 + \dfrac{2x}{\rho(x)},
	\end{equation}
	thus using the radial parametrization on $\mathcal{W}$, we get
	\begin{equation}
		DF^0(\theta) = -\dfrac{D\rho(\theta)}{\rho^2(\theta)} + \dfrac{2\theta}{\rho(\theta)} = -\dfrac{\rho\theta+\nabla_{\mathbb{S}^n}\rho}{\rho^2} + \dfrac{2\theta}{\rho} = \dfrac{\theta}{\rho} - \dfrac{\nabla_{\mathbb{S}_n}\rho}{\rho^2}. \label{df}
	\end{equation}
	After combining (\ref{radial}), (\ref{df}), and using
	$$\dfrac{\nabla_{\mathbb{S}^n}\rho}{\rho^2} -\dfrac{\nabla_{\mathbb{S}^n}r}{\rho r} = - \dfrac{1}{r}\left( \nabla_{\mathbb{S}^n} \dfrac{r}{\rho} \right),$$
	we arrive at
	\begin{eqnarray} 
		F^0(x)F(\nu)-x\cdot \nu  &=&\left(x\cdot\nu\right)\left[F\left(DF^0(\theta) + \dfrac{\nabla_{\mathbb{S}^n}\rho}{\rho^2} -\dfrac{\nabla_{\mathbb{S}^n}r}{\rho r}\right)-1\right] \notag \\
		&=&\left(x\cdot\nu\right)\left[F\left( DF^0(\theta) - \dfrac{1}{r}\left( \nabla_{\mathbb{S}^n} \dfrac{r}{\rho} \right)\right)-1\right]. \label{domi}
	\end{eqnarray}
	Consider $\mathcal{W}^*$ to be the dual of $\mathcal{W}$, that is
	\begin{equation}
		\mathcal{W}^* := \{x\in\mathbb{R}^{n+1}:F(x)=1\}.
	\end{equation}
	Note that, by (\ref{eq-DF(DF^0)}), the normal vector of $\mathcal{W}^*$ at $DF^0(\theta)\in \mathcal{W}^*$ is $DF\left( DF^0(\theta) \right)=\dfrac{\theta}{F^0(\theta)}$, then
	$$-\dfrac{1}{r}\left( \nabla_{\mathbb{S}^n} \dfrac{r}{\rho} \right)\in T_{\theta}\mathbb{S}^n = T_{DF^0(\theta)}\mathcal{W}^*.$$
	Again, by the fact that the $C^1$ bound of $\hat{\Sigma}_t$ is uniformly controlled by the initial datum on $\Sigma$, and since $\mathcal{W}^*$ is strictly convex, we can expand $F\left( DF^0(\theta) - \dfrac{1}{r}\left( \nabla_{\mathbb{S}^n} \dfrac{r}{\rho} \right) \right)$ at $DF^0(\theta)\in\mathcal{W}^*$ and find a positive constant $C=C(n,F,\min\limits_{\mathbb{S}^n}r,\max\limits_{\mathbb{S}^n}|\nabla_{\mathbb{S}^n}r|)$ such that
	\begin{eqnarray}
		F\left( DF^0(\theta) - \dfrac{1}{r}\left( \nabla_{\mathbb{S}^n} \dfrac{r}{\rho} \right) \right) \geqslant 1+ C\dfrac{1}{r^2}\left| \nabla_{\mathbb{S}^n} \dfrac{r}{\rho} \right|^2 \geqslant 1 + C\left| \nabla_{\mathbb{S}^n} \dfrac{r}{\rho} \right|^2,
	\end{eqnarray}
	where we used  $F(DF^0(\theta))=1$. Consequently, (\ref{domi}) becomes
	\begin{equation}\label{lower-gradient-expand}
		F^0(x)F(\nu)-x\cdot \nu \geq C(n,F,\min\limits_{\mathbb{S}^n}r,\max\limits_{\mathbb{S}^n}|\nabla_{\mathbb{S}^n}r|)(x\cdot\nu)\left| \nabla_{\mathbb{S}^n} \dfrac{r}{\rho} \right|^2.
	\end{equation}
	Note that
	\begin{equation}
		(x\cdot\nu)\mathrm{d}\mu  = r^{n+1} \mathrm{d}\mathbb{S}^n \geqslant C(n,\min\limits_{\mathbb{S}^n}r) \mathrm{d}\mathbb{S}^n, 
	\end{equation}
	combining \eqref{lowerdist1} and \eqref{lower-gradient-expand}, we get
	\begin{equation}
		\begin{aligned}
			\int_{\mathbb{S}^n} \left|\dfrac{r}{\rho} -a\right|^2 \mathrm{d}\mathbb{S}^n&\leq C(n,F,\min\limits_{\mathbb{S}^n}r,\max\limits_{\mathbb{S}^n}|\nabla_{\mathbb{S}^n}r|)\int_{\mathbb{S}^n} \left| \nabla_{\mathbb{S}^n} \dfrac{r}{\rho} \right|^2 \mathrm{d}\mathbb{S}^n\\
			&\leq C(n,F,\min\limits_{\mathbb{S}^n}r,\max\limits_{\mathbb{S}^n}|\nabla_{\mathbb{S}^n}r|)\int_{\hat{\Sigma}_{t_{\varepsilon}}} \left(F^0(x)F(\nu)-x\cdot\nu\right) \mathrm{d}\mu\\
			&\leq C_0(n,F,\min\limits_{\mathbb{S}^n}r,\max\limits_{\mathbb{S}^n}|\nabla_{\mathbb{S}^n}r|)\sqrt{\epsilon}.
		\end{aligned}
	\end{equation}
	where the Poincar\'e inequality on $\mathbb{S}^n$ is used in the first inequality for $a=\dfrac{1}{|\mathbb{S}^n|}\displaystyle\int_{\mathbb{S}^n} \dfrac{r}{\rho} \mathrm{d}\mathbb{S}^n$. We conclude that
	\begin{equation}
		\max\left\lbrace \max\limits_{\mathbb{S}^n}\dfrac{r}{\rho} - a,a-\min\limits_{\mathbb{S}^n}\dfrac{r}{\rho} \right\rbrace \leqslant C_1(n,F,\min\limits_{\mathbb{S}^n}r,\max\limits_{\mathbb{S}^n}|\nabla_{\mathbb{S}^n}r|) \varepsilon^{\frac{1}{2(n+2)}}=:L.
	\end{equation}
 Indeed, on the contrary, if we suppose
	\begin{equation}
		\max\limits_{\mathbb{S}^n}\dfrac{r}{\rho}-a = \dfrac{r}{\rho}(\theta_0)-a > L.
	\end{equation}
	Then for $l:=\dfrac{L}{2\max\limits_{\mathbb{S}^n}\left| \nabla_{\mathbb{S}^n} \frac{r}{\rho} \right|}>0$, there holds 
	\begin{equation}
		\left|\dfrac{r}{\rho}(\xi)-a\right|>\dfrac{L}{2},\ \forall\ \xi\in\ \overline{B}_l(\theta_0)\subset\mathbb{S}^n.
	\end{equation}
	Thus
	\begin{equation}
		C_0\sqrt{\varepsilon}\geqslant \int_{\mathbb{S}^n} \left| \dfrac{r}{\rho}-a \right|^2  \mathrm{d}A \geqslant  \int_{\overline{B}_{l}(\theta)}  \left| \dfrac{r}{\rho}-a \right|^2 \mathrm{d}A \geq C_2L^{n+2}=C_2C_1^{n+2}\sqrt{\epsilon},
	\end{equation}
	which makes a contradiction when $C_1=\left(\frac{2C_0}{C_2}\right)^{\frac{1}{n+2}}$, where $C_2$ depends on $\max\limits_{\mathbb{S}^n}|\nabla_{\mathbb{S}^n}r|$. Hence
	\begin{equation}
		\mathrm{dist}(\hat{\Sigma}_{t_{\varepsilon}},a\mathcal{W}) \leqslant C \|r-a\rho\|_{L^{\infty}(\mathbb{S}^n)} \leqslant  C(n,F,\min\limits_{\mathbb{S}^n}r,\max\limits_{\mathbb{S}^n}|\nabla_{\mathbb{S}^n}r|)\varepsilon^{\frac{1}{2(n+2)}}.  \label{dist2}
	\end{equation}
	After combining (\ref{dist1}) and (\ref{dist2}), we obtain
	\begin{equation}
		\begin{aligned}
			\mathrm{dist}(\Sigma,&a\mathcal{W}) \leqslant \mathrm{dist}(\Sigma,\hat{\Sigma}_{t_{\varepsilon}})+\mathrm{dist}(\hat{\Sigma}_{t_{\varepsilon}},a\mathcal{W})\\
			\leqslant& C(n,F,\rho_{-}(\Omega),\max\limits_{\mathbb{S}^n}\left|\nabla_{\mathbb{S}^n} r \right|,\min\limits_{\Sigma} H_F) \left(\varepsilon^{\frac{1}{2(n+2)}}+\sqrt{\epsilon}\right).
		\end{aligned}
	\end{equation}
\end{proof}

\begin{proof}[Proof of \eqref{fostable} in Theorem \ref{thm-aniso-stability}]
The argumet is exactly the same as the proof of \eqref{L1-fostable}, since the gradient bound condition also leads to $C^0$ and anisotropic area bound of $\Sigma$. By Young's inequality and the fact that $f_2(ab)\geq \min\{\sqrt{a},a^{ \frac{1}{2(n+2)}} \}f_2(b),\ a,b\geq0$, we get the conclusion. 
\end{proof}
In the following, we use the inequailties proved in Theorem \ref{thm-L1-stability} and \ref{thm-aniso-stability} to show the stability of the Weinstock inequality for star-shaped, strictly mean convex domains.

\begin{proof}[Proof of Corollary \ref{cor-weinstock-stability}]
    Let $P$ be the barycenter of $\Omega$. Let $x_i\ (i=1,2,\cdots,n+1)$ be coordinate functions of $\Omega$, then
    \begin{equation}
    	\int_{\Omega} \left(x_i-P_i\right)\mathrm{d}x =0,\ \ |Dx_i|=1,\ \ i=1,2,\cdots,n+1.
    \end{equation}
    We also have
    \begin{equation}
    	\sigma_1(\Omega) \leqslant \dfrac{(n+1)\mathrm{Vol}(\Omega)}{\displaystyle\int_{\partial\Omega}|x-P|^2 \mathrm{d}\mu}
    \end{equation}
    and
    \begin{equation}
    	\sigma_1(\mathbb{B}^{n+1})=1
    \end{equation}
    by definition (\ref{steklov}). After using the isoperimetric inequality 
    \begin{equation}
    	\dfrac{\mathrm{Vol}(\Omega)^{\frac{n-1}{n+1}}}{|\partial\Omega|^{\frac{n-1}{n}}} \leqslant \dfrac{\mathrm{Vol}(\mathbb{B}^{n+1})^{\frac{n-1}{n+1}}}{|\mathbb{S}^n|^{\frac{n-1}{n}}},
    \end{equation}
    we obtain
    \begin{eqnarray}
    	&&\sigma_1(\mathbb{B}^{n+1}) \left|\mathbb{S}^n\right|^{\frac{1}{n}} - \sigma_1(\Omega) \left|\partial \Omega\right|^{\frac{1}{n}} \notag\\
    	&\geqslant& \left|\mathbb{S}^n\right|^{\frac{1}{n}}-\dfrac{(n+1)\mathrm{Vol}(\Omega)|\partial\Omega|^{\frac{1}{n}}}{\displaystyle\int_{\partial\Omega}|x-P|^2\mathrm{d}\mu}\notag\\
    	&\geqslant&  (n+1)\dfrac{\mathrm{Vol}(\mathbb{B}^{n+1})^{\frac{n-1}{n+1}}}{\left|\mathbb{S}^n\right|^{\frac{n-1}{n}}} \left[ \mathrm{Vol}(\mathbb{B}^{n+1})^{\frac{2}{n+1}} - \dfrac{|\partial\Omega|\mathrm{Vol}(\Omega)^{\frac{2}{n+1}}}{\displaystyle\int_{\partial\Omega} |x-P|^2 \mathrm{d}\mu } \right]. \label{s1}
    \end{eqnarray}
   Substituting $F^0(x-P)=|x-P|$, $p=2$ in \eqref{L1-fostable} and $P=o$ in (\ref{fostable}), combining \eqref{ineq-aniso2'} in isotropic case:
    $$\int_{\partial\Omega}|x-P|^2\mathrm{d}\mu\geqslant \mathrm{Vol}(\mathbb{B}^{n+1})^{-\frac{2}{n+1}}|\partial\Omega|\mathrm{Vol}(\Omega)^{\frac{2}{n+1}},$$
    we conclude that
      \begin{equation}\label{s2}
  	\mathrm{Vol}(\mathbb{B}^{n+1})^{\frac{2}{n+1}} - \dfrac{|\partial\Omega|\mathrm{Vol}(\Omega)^{\frac{2}{n+1}}}{\displaystyle\int_{\partial\Omega} |x-P|^2 \mathrm{d}\mu } \geqslant \left\{\begin{aligned}
  		&C\cdot f_1^{-1}\left[\alpha(\Omega)\right],\\
  		&C\cdot f_2^{-1}\left[\mathrm{dist}(\partial\Omega,a\mathbb{S}^n)\right].
  	\end{aligned}\right. 
  \end{equation}
   Combining (\ref{s1}) and (\ref{s2}), we obtain the desired conclusions \eqref{L1-weinstable} and (\ref{weinstable}).
\end{proof}

\begin{ack}
The authors would like to thank Professor Yong Wei for his helpful discussions and constant support. The authors were supported by National Key Research and Development Program of China 2021YFA1001800.
\end{ack}


\begin{thebibliography}{9}

	\bibitem{BAndrews98} B. Andrews, \emph{Evolving convex curves}, Calc. Var. 7, 315–371 (1998).
	
	\bibitem{BAndrews01} B. Andrews, \emph{Volume-preserving anisotropic mean curvature flow}, Indiana Univ. Math. J. 50 (2)(2001) 783–827.
	
	\bibitem{B-F}L. Brasco, G. Franzina, \emph{An anisotropic eigenvalue problem of Stekloff type and weighted Wulff inequalities}, Nonlinear Differential Equations Appl., 20(6)(2013):1795--1830.
	
	\bibitem{JBro-FMor}	J. E. Brothers, F. Morgan, \emph{The isoperimetric theorem for general integrands}, Michigan Math. J., 41(3)(1994):419--431.
	
	\bibitem{B-F-N-T}D. Bucur, V. Ferone, C. Nitsch and C. Trombetti, \emph{Weinstock inequality in higher dimensions}, J. Differential Geom. 118(1): 1--21 (2021).

	\bibitem{ADRosa-K-S}A. De Rosa, S. Kolasi\'Nski and M. Santilli, \emph{Uniqueness of critical points of the anisotropic isoperimetric problem for finite perimeter sets}, Arch. Rational Mech. Anal., 238(2020):1157--1198.
    
    \bibitem{figalli} A. Figalli, F. Maggi and A. Pratelli. \emph{A mass transport approach to quantitative isoperimetric inequalities}, Invent. Math., 182(2010):167--211.


	\bibitem{IF-SM}I. Fonseca, S. M\"uller, \emph{A uniqueness proof for the Wulff Theorem}, Proc. Roy. Soc. Edinburgh Sect., A 119(1-2), 125--136, 1991.

    \bibitem{Gavitone} Nunzia Gavitone, Domenico Angelo La Manna, Gloria Paoli, and Leonardo Trani, \emph{A quantitative Weinstock inequality for convex sets}, Calculus of Variations and Partial Differential Equations, 59(2), 2020.

	\bibitem{FGirao-DRodrigues}F. Gir\~ao, D. Rodrigues, \emph{Weighted geometric inequalities for hypersurfaces in sub-static manifolds}, Bull. London Math. Soc., \textbf{52}(2020), 121--136.
	
	\bibitem{Guan-Li09}P. Guan, J. Li, \emph{The quermassintegral inequalities for $k$-convex starshaped domains}, Adv.Math. \textbf{221}(5)(2009), 1725--1732.
	
	\bibitem{HeLiMaGe}Y. He, H. Li, H. Ma and J. Ge, \emph{Compact embedded hypersurfaces with constant higher order anisotropic mean curvature}, Indiana Univ. Math. J., 58(2), 853--868, 2009.
	
	
	\bibitem{Kwong-Wei}K.-K. Kwong, Y. Wei, \emph{Geometric inequalities involving three quantities in warped product manifolds}, Adv. Math., Vol. 430, 1 Oct 2023, article no. 109213.
	
	\bibitem{Maggi}F. Maggi, \emph{Sets of finite perimeter and geometric variational problems: An introduction to geometric measure theory}, Cambridge Studies in Advanced Mathematics No. 135, Cambridge University Press, Cambridge(2012).
	
	\bibitem{NR} Neumayer, Robin, \emph{A strong form of the quantitative Wulff inequality}, SIAM J. Math. Anal.48(2016), no.3, 1727–1772.

	\bibitem{G.Paoli-LTrani}G. Paoli, L. Trani, \emph{Anisotropic isoperimetric inequalities involving boundary momentum, perimeter and volume}, Nonlinear Analysis, \textbf{187}(2020), 229--246.
	
	\bibitem{Sahjwani-Scheuer} P. Sahjwani, J. Scheuer, \emph{Stability of the quermassintegral inequalities in hyperbolic space}, Journal of Geometric Analysis, 34 (2023).
	
	\bibitem{Scheuer} J. Scheuer, \emph{Stability from rigidity via umbilicity}, arXiv:2103.07178v2, 2023.
	
	\bibitem{Sche} J. Scheuer, X. Zhang. \emph{Stability of the wulff shape with respect to anisotropic curvature functions}, arXiv: 2308.15999, 2023. 
	
	\bibitem{Taylor}J. Taylor, \emph{Unique structure of solutions to a class of nonelliptic variational problems}, Proc. Sympos. Pure Math., 27(1975), 419--427.
	
	\bibitem{Taylor'}J. Taylor, \emph{Crystalline variational problems}, Bull. Amer. Math. Soc., 84(1978), 568--588.
	
	\bibitem{Wul01}G. Wulff, \emph{Zur Frage der Geschwindigkeit des Wachsturms und der aufl\"osungder Kristall-Fl\"achen}, Z. Kristallogr., 34:449--530, 1901.

    \bibitem{CXia13}C. Xia, \emph{On an anisotropic Minkowski problem}, Indiana Univ. Math. J. 62(5)(2013) 1399--1430.
    
    \bibitem{CXia17'}C. Xia, \emph{Inverse anisotropic curvature flow from convex hypersurfaces}, J. Geom. Anal. (2017)27:2131–2154.

	\bibitem{CXia17}C. Xia, \emph{Inverse anisotropic mean curvature flow and a Minkowski type inequality}, Adv.Math. \textbf{315}(2017), 102--129.
\end{thebibliography}
\end{document}